\numberwithin{equation}{section}
\newtheorem{theorem}{Theorem}[section]
\newtheorem{corollary}[theorem]{Corollary}
\newtheorem{lemma}[theorem]{Lemma}
\theoremstyle{definition}
\newtheorem{remark}[theorem]{Remark}
\newcolumntype{C}[1]{>{\centering\arraybackslash}m{#1}}
\newcommand{\K}{\mathcal{K}}
\newcommand{\F}{\mathcal{F}}
\newcommand{\dK}{\partial K}
\newcommand{\FK}{\F_{\dK}}
\newcommand{\ZF}{Z_{\F,0}}
\newcommand{\ZFd}{Z_{\F,0}^\dagger}
\date{\vspace{-6ex}}
\begin{document}

\newcommand{\Question}[1]{{\marginpar{\color{blue}\footnotesize #1}}}
\newcommand{\blue}[1]{{\color{blue}#1}}
\newcommand{\red}[1]{{\color{red} #1}}
%\definecolor{verde}{rgb}{0.2, 0.8, 0.3}
%\renewcommand{\ae}[1]{{\color{verde} #1}}

\newif \ifNUM \NUMtrue

\title{Spectral approximation of elliptic operators by the Hybrid High-Order method}
\author{Victor Calo\thanks{Department of Applied Geology, Western Australian School of Mines, Curtin University, Kent Street, Bentley, Perth, WA 6102, Australia; Mineral Resources, Commonwealth Scientific and Industrial Research Organisation (CSIRO), Kensington, Perth, WA 6152, Australia. E-mail address: victor.calo@curtin.edu.au} 
\and
Matteo Cicuttin\thanks{University Paris-Est, CERMICS (ENPC), 77455 Marne la Vall\'ee cedex 2, and INRIA Paris, 75589 Paris, France. E-mail address: matteo.cicuttin@enpc.fr} 
\and 
Quanling Deng\thanks{Corresponding author. Curtin Institute for Computation and Department of Applied Geology, Western Australian School of Mines, Curtin University, Kent Street, Bentley, Perth, WA 6102, Australia. E-mail address: quanling.deng@curtin.edu.au} 
%, Western Australian School of Mines,
\and 
Alexandre Ern\thanks{University Paris-Est, CERMICS (ENPC), 77455 Marne la Vall\'ee cedex 2, and INRIA Paris, 75589 Paris, France. E-mail address: alexandre.ern@enpc.fr}} 

\maketitle

\begin{abstract}
We study the approximation of the spectrum of a second-order elliptic differential operator by the Hybrid High-Order (HHO) method. The HHO method is formulated using cell and face unknowns which are polynomials of some degree $k\geq0$. The key idea for the discrete eigenvalue problem is to introduce a discrete operator where the face unknowns have been eliminated.
%Only cell unknowns correspond to eigenvalues and eigenfunctions, while the face unknowns help deliver superconvergence. 
Using the abstract theory of spectral approximation of compact operators in Hilbert spaces, we prove that the eigenvalues converge as $h^{2t}$ and the eigenfunctions as $h^{t}$ in the $H^1$-seminorm, where $h$ is the mesh-size, $t\in [s,k+1]$ depends on the smoothness of the eigenfunctions, and $s>\frac12$ results from the elliptic regularity theory. The convergence rates for smooth eigenfunctions are thus $h^{2k+2}$ for the eigenvalues and $h^{k+1}$ for the eigenfunctions. Our theoretical findings, which improve recent error estimates for Hybridizable Discontinuous Galerkin (HDG) methods, are verified on various numerical examples including smooth and non-smooth eigenfunctions. Moreover, we observe numerically in one dimension for smooth eigenfunctions that the eigenvalues superconverge as $h^{2k+4}$ for a specific value of the stabilization parameter. \textbf{Mathematics Subjects Classification}: 65N15, 65N30, 65N35, 35J05
\end{abstract}
%
%\begin{AMS} 65N15, 65N30, 65N35, 35J05
%\end{AMS}
%

%\begin{keywords} 
\paragraph*{Keywords}
Hybrid high-order methods, eigenvalue approximation, eigenfunction approximation, spectrum analysis, error analysis
%\end{keywords}

\section{Introduction} \label{sec:intr} 

The Hybrid High-Order (HHO) method has been recently introduced
for diffusion problems in \cite{DiPEL2014arbitrary} and for 
linear elasticity problems in \cite{DiPEr2015hybridl}. 
The HHO method is formulated by introducing cell 
and face unknowns which are polynomials of some degree $k\geq0$ 
(some variations in the degree of the cell unknowns are possible; see~\cite{CoDPE2016bridging}).
The method is then devised from a local reconstruction operator and 
a (subtle) local stabilization operator 
in each mesh cell. 
This leads to a discretization method that 
supports general meshes (with polyhedral cells and non-matching
interfaces). Moreover, when approximating smooth solutions of second-order 
elliptic source problems, the method delivers 
error estimates of 
order $h^{k+1}$ in the $H^1$-seminorm and of order $h^{k+2}$ in the $L^2$-norm under full elliptic regularity. 
Positioning unknowns at the mesh faces is also a natural way to express locally in each mesh cell the balance properties satisfied by the model problem.
As shown in~\cite{CoDPE2016bridging}, the HHO method
can be fitted into the family of Hybridizable Discontinuous Galerkin (HDG) 
methods introduced in~\cite{CoGoL:09} (and thus to the Weak 
Galerkin method \cite{WangY:13}) and is also closely related to the nonconforming Virtual Element Method from \cite{AyLiM:16}.
The HHO method has undergone a vigorous development over the last few years;
we mention, among others, the application to advection-diffusion equations in \cite{DiPDE2015discontinuous}, to the Stokes equations in \cite{DPELS:16}, to the
Leray--Lions equations in \cite{di2016hybrid}, 
and to hyperelasticity with finite deformations in \cite{AbErP:17}.
The implementation of HHO methods is described in \cite{cicuttin2017implementation}. As already pointed out in \cite{DiPEL2014arbitrary,DiPEr2015hybridl}, 
the cell unknowns can be eliminated locally
in each mesh cell, leading to a global Schur complement problem 
with compact stencil in terms of the face unknowns. 

The goal of this work is to devise and analyze HHO methods for the discretization of the eigenvalue problem associated with a second-order elliptic differential operator. The key idea is to formulate the discrete eigenvalue problem by letting the mass bilinear form act only on the cell unknowns, whereas the stiffness bilinear form acts, as for the discrete source problem, on both cell and face unknowns. Thus, the first main contribution of this work is to identify the relevant HHO solution operator approximating the exact solution operator. We show that this can be achieved by introducing a purely cell-based operator, where the face unknowns have been eliminated by expressing them in terms of the cell unknowns. Note that the elimination process is reversed with respect to the usual approach for the source problem, where one ends up with a face-based discrete operator. While the present cell-based operator is not needed for actual computations, it plays a central role in the error analysis. Indeed, with this tool in hand, it becomes possible to analyze the approximation error on the eigenvalues and the eigenfunctions by means of the abstract theory of spectral approximation of compact operators in Hilbert spaces following the work of Va\v{\i}nikko \cite{vainikko1964asymptotic,vainikko1967speed}, Bramble and Osborn \cite{bramble1973rate,osborn1975spectral}, Descloux et al.~\cite{descloux1978spectral,descloux1978spectral2}, and Babu\v{s}ka and Osborn \cite{babuvska1991eigenvalue}. 
%The error analysis entails some further subtleties related to the special structure of the HHO reconstruction and stabilization operators. 
The second main contribution of this work is Theorem~\ref{thm:errest} and Corollary~\ref{cor:H1} which establish a convergence of order $h^{2t}$ for the eigenvalues and of order $h^{t}$ for the eigenfunctions in the $H^1$-seminorm, where $t\in [s,k+1]$ is the smoothness index related to the eigenfunctions and $s\in (\frac12,1]$ is the smoothness index resulting from the elliptic regularity theory. 
In the case of smooth eigenfunctions, we have $t=k+1$, leading to a convergence of order $h^{2k+2}$ for the eigenvalues and of order $h^{k+1}$  for the eigenfunctions in the $H^1$-seminorm.  These convergence orders are confirmed by our numerical experiments including both smooth and non-smooth eigenfunctions of the Laplace operator in one and two dimensions. We highlight that these convergence results are so far lacking for HDG methods (see the discussion in the next paragraph), so that the present work contributes to fill this gap.
Finally, the third contribution of this work is the numerical observation of a superconvergence of order $h^{2k+4}$ for the eigenvalues in one dimension whenever the stabilization parameter is chosen to be equal to $(2k+3)$. 

Let us put our results in perspective with the literature on the approximation of elliptic eigenvalue problems by other discretization methods. Following the early work in \cite{strang1973analysis}, it is well-known that using $H^1$-conforming finite elements of degree $k\ge1$ on simplicial meshes leads to convergence rates of order $h^{2k}$ for the eigenvalues and of order $h^k$ for the eigenfunctions (provided the eigenfunctions are smooth enough). We refer the reader to \cite{boffi2010finite} for a review on the finite element approximation of eigenvalue problems. Similar results were obtained more recently in \cite{antonietti2006discontinuous,giani2015hp} for discontinuous Galerkin (dG) methods. The analysis of the spectral approximation by mixed and mixed-hybrid methods was started in \cite{canuto1978eigenvalue,mercier1978eigenvalue,mercier1981eigenvalue} and expanded in \cite{DurGP:99,BofBL:00}. Hybridization techniques leading to an eigenproblem on the face unknowns were studied in \cite{cockburn2010hybridization} for Raviart--Thomas mixed finite elements; therein, it was also observed that the use of a local post-processing technique improves the accuracy of the computed eigenfunctions (see also \cite{Gardini:09} for the lowest-order case). The approximation of elliptic eigenvalue problems using the Virtual Element Method (VEM) was studied in \cite{gardini2017virtual}, where optimal convergence rates were obtained. The spectral approximation of elliptic operators by the HDG method was analyzed in~\cite{gopalakrishnan2015spectral}, leading to a convergence of order $h^{2k+1}$ for the eigenvalues; therein, a non-trivial post-processing using a Rayleigh quotient was also examined numerically leading to an improved convergence of order $h^{2k+2}$ for $k\ge1$. In contrast, the HHO approximation directly delivers a provable convergence of order $h^{2k+2}$ even for $k=0$.
Finally, let us mention the recent work in \cite{puzyrev2017dispersion,deng2017dispersion,calo2017dispersion} which studies numerically the optimally blended quadrature rules \cite{ainsworth2010optimally} for the isogeometric analysis \cite{hughes2014finite} of the Laplace eigenvalue problem and reports superconvergence of order $h^{2k+2}$ for the eigenvalue errors while maintaining optimal convergence of orders $h^k$ and $h^{k+1}$ for the eigenfunction errors in the $H^1$-seminorm and in the $L^2$-norm, respectively.

The rest of this paper is organized as follows. Section \ref{sec:ps} presents the second-order elliptic eigenvalue problem and briefly recalls the main abstract results we are going to use concerning the spectral approximation of compact operators in Hilbert spaces. Section~\ref{sec:hho} deals with the HHO discretization, first of the source problem and then of the eigenvalue problem. The algebraic realization of both problems is also presented. Section~\ref{sec:hho} additionally identifies the relevant notion of discrete solution operator for HHO methods and outlines the error analysis for the HHO discretization of the source problem. This analysis is based on the results of \cite{DiPEL2014arbitrary}, but we handle the case where the exact solution does not have full regularity. Section~\ref{sec:ea} is concerned with the error analysis for the HHO discretization of the eigenvalue problem and contains our main results.  
Section \ref{sec:num} presents our numerical examples. Finally, some concluding remarks are collected in Section \ref{sec:conclusion}.

%---------------------------------------------------
\section{Functional setting} \label{sec:ps}
In this section, we present the second-order elliptic eigenvalue problem, and briefly recall the main abstract results on the approximation of the spectrum of compact operators in Hilbert spaces.
%---------------------------------------------------
\subsection{Problem statement} 
\label{sec:statement}

We consider the following second-order elliptic eigenvalue problem: Find an eigenpair $(\lambda, u)$ with $\lambda\in\mathbb{R}_{>0}$ and $u:\Omega\rightarrow\mathbb{R}$ such that 
\begin{equation} \label{eq:pdee}
\begin{alignedat}{2}
-\Delta u & =  \lambda u &\quad &\text{in $\Omega$}, \\
u & = 0 &\quad &\text{on $\partial \Omega$},
\end{alignedat}
\end{equation}
where $\Omega \subset  \mathbb{R}^d$, $d\in\{1,2,3\}$, is a bounded open domain with Lipschitz boundary $\partial \Omega$ and $\Delta$ is the Laplacian. In weak form, the problem \eqref{eq:pdee} reads as follows: Find $(\lambda, u)\in \mathbb{R}_{>0}\times H^1_0(\Omega)$ such that 
\begin{equation} \label{eq:eigen_weak}
a(u, w) = \lambda b(u, w), \qquad \forall w \in H^1_0(\Omega),
\end{equation}
with the bilinear forms $a$ and $b$ defined on $H^1_0(\Omega)\times H^1_0(\Omega)$ and $L^2(\Omega)\times L^2(\Omega)$ as
\begin{equation}
a(v, w) = (\nabla v, \nabla w)_{L^2(\Omega)}, \qquad  b(v, w) = (v, w)_{L^2(\Omega)},
\end{equation}
where $(\cdot, \cdot)_{L^2(\Omega)}$ denotes the inner product in $L^2(\Omega)$ or in $L^2(\Omega;\mathbb{R}^d)$.
The eigenvalue problem \eqref{eq:pdee} has a countably infinite sequence of eigenvalues $(\lambda_j)_{j\ge1}$ (see, among many others, \cite[Sec.~9.8]{Brezis:11}) such that
\begin{equation}
0 < \lambda_1 < \lambda_2 \leq \lambda_3 \leq \cdots, \qquad \lambda_j\to +\infty,
\end{equation}
and an associated sequence of $L^2$-orthonormal eigenfunctions $(u_j)_{j\ge1}$ such that
\begin{equation}
(u_j, u_l)_{L^2(\Omega)} = \delta_{jl}, \qquad \forall j, l\ge1,
\end{equation}
with the Kronecker delta defined as $\delta_{jl} =1$ when $j=l$ and zero otherwise. 

The source problem associated with the eigenvalue problem~\eqref{eq:eigen_weak} is as follows: For all $\phi\in L^2(\Omega)$, find $u\in H^1_0(\Omega)$ such that 
\begin{equation} \label{eq:source_weak}
a(u,w) = b(\phi, w), \qquad \forall w \in H^1_0(\Omega).
\end{equation}
The solution operator associated with~\eqref{eq:source_weak} is denoted as
$T: L^2(\Omega) \rightarrow L^2(\Omega)$, so that we have $T(\phi)\in H^1_0(\Omega) \subset L^2(\Omega)$ and
\begin{equation} \label{eq:sourceT_weak}
a(T(\phi),w) = b(\phi, w), \qquad \forall w \in H^1_0(\Omega).
\end{equation}
%so that we have $-\Delta T(\phi)=\phi$ a.e.~in $\Omega$. 
By the Rellich--Kondrachov Theorem (see, e.g., \cite[Thm.~1.4.3.2]{Gr85}), 
$T$ is compact from $L^2(\Omega)$ to $L^2(\Omega)$. 
Moreover, the elliptic regularity theory
(see, e.g., \cite{Gr85,Savare_1998,Jochmann_1999})
implies that there is a real number $s\in (\frac12,1]$ so that 
$T \in \mathcal{L}(L^2(\Omega);H^{1+s}(\Omega))$.
The reason for introducing the solution operator $T$ is that 
$(\lambda, u)\in\mathbb{R}_{>0}\times H^1_0(\Omega)$ is an eigenpair for \eqref{eq:eigen_weak} if and only if $(\mu,u)\in\mathbb{R}_{>0}\times H^1_0(\Omega)$ with $\mu = \lambda^{-1}$ is an eigenpair of $T$. 

One can also consider the adjoint solution operator $T^*: L^2(\Omega) \rightarrow L^2(\Omega)$ such that, for all $\psi \in L^2(\Omega)$, $T^*(\psi)\in H^1_0(\Omega)$ and 
\begin{equation} \label{eq:sourceTadj_weak}
a(w,T^*(\psi)) = b(w,\psi), \qquad \forall w \in H^1_0(\Omega).
\end{equation}
The symmetry of the bilinear forms $a$ and $b$ implies that $T=T^*$; however, allowing more generality, we keep a distinct notation for the two operators. Since in general we have 
\begin{equation} \label{eq:tts}
(T(\phi),\psi)_{L^2(\Omega)} = a(T(\phi),T^*(\psi)) = (\phi,T^*(\psi))_{L^2(\Omega)},
\end{equation}
we infer that $T^*$ is the adjoint operator of $T$, once the duality product is identified with the inner product in $L^2(\Omega)$. Therefore, in the present symmetric context, the operator $T$ is selfadjoint.

\subsection{Spectral approximation theory for compact operators} 
\label{sec:theory}

Let us now briefly recall the main results we use concerning the spectral approximation of compact operators in Hilbert spaces. Let $L$ be a Hilbert space with inner product denoted by $(\cdot,\cdot)_L$, and let $T \in \mathcal{L}(L;L)$; assume that $T$ is compact. We do not assume for the abstract theory that $T$ is selfadjoint and we let $T^*\in\mathcal{L}(L;L)$ denote the adjoint operator of $T$. Let $T_n\in \mathcal{L}(L;L)$ be a member of a sequence of compact operators that converges to $T$ in operator norm, i.e., 
\begin{equation} \label{eq:normc}
\lim_{n \to +\infty} \|  T -  T_n \|_{\mathcal{L}(L; L)} = 0,
\end{equation}
and let $T_n^*\in \mathcal{L}(L;L)$ be the adjoint operator of $T_n$.
We want to study how well the eigenvalues and the eigenfunctions of $T_n$ approximate those of $T$. 
Let $\sigma(T)$ denote the spectrum of the operator $T$ and let $\mu \in \sigma(T)\setminus\{0\}$ be a nonzero eigenvalue of $T$. Let $\alpha$ be the ascent of $\mu$, i.e., the smallest integer $\alpha$ such that 
$\text{ker}(\mu I -  T)^\alpha = \text{ker}(\mu I -  T)^{\alpha+1}$,
where $I$ is the identity operator. Let also 
\begin{equation} \label{eq:def_Gmu}
G_\mu = \text{ker}(\mu I-T)^\alpha, \qquad G^*_\mu = \text{ker}(\mu I-T^*)^\alpha,
\end{equation} 
and $m = \text{dim}(G_\mu)$ (this integer is called the algebraic multiplicity of $\mu$; note that $m\ge \alpha$). 

%-------------------------------------------------------
\begin{theorem}[Convergence of the eigenvalues] \label{thm:eve0}
Let $\mu\in \sigma(T)\setminus\{0\}$. 
Let $\alpha$ be the ascent of $\mu$ and let $m$ be its algebraic multiplicity.
Then there are $m$ eigenvalues of $T_n$, denoted as $\mu_{n,1}, \cdots, \mu_{n,m}$, that converge to $\mu$ as $n \to +\infty$. Moreover, letting $\langle \mu_n \rangle = \frac{1}{m} \sum_{j=1}^m \mu_{n,j}$ denote their arithmetic mean, 
there is $C$, depending on $\mu$ but independent of $n$, such that 
\begin{equation} \label{eq:cv_eigenval}
\begin{aligned}
\max_{1 \le j \le m} |\mu - \mu_{n,j}|^\alpha & + |\mu - \langle \mu_n \rangle | \le C \Big( \sup_{\substack{0\ne \phi \in G_\mu\\ 0\ne \psi \in G_\mu^*}} \frac{|((T-T_n)\phi, \psi)_L|}{\| \phi \|_L  \| \psi \|_L } \\
& + \|(T-T_n)|_{G_\mu} \|_{\mathcal{L}(G_\mu;L)}  \|(T-T_n)^*|_{G_\mu^*} \|_{\mathcal{L}(G_\mu^*;L)} \Big).
\end{aligned}
\end{equation}
\end{theorem}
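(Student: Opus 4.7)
The proof follows the classical spectral perturbation argument developed by Osborn and Descloux, as presented in \cite{babuvska1991eigenvalue}. The plan is to organize it in three stages: localization via spectral projections, reduction to a finite-dimensional perturbation on $G_\mu$, and extraction of the eigenvalue bounds.

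\textbf{Spectral projections and counting.} Since $\mu\ne 0$ is an isolated point of $\sigma(T)$ (because $T$ is compact), I would select a small circle $\Gamma$ about $\mu$ lying in the resolvent set of $T$ and enclosing no other spectral point. Define the Riesz projection
\[
E = \frac{1}{2\pi\mathrm{i}} \oint_\Gamma (zI - T)^{-1}\, dz,
\]
whose range is $G_\mu$ and whose rank is $m$. The norm convergence $T_n\to T$ implies that the resolvents $(zI - T_n)^{-1}$ are well-defined and uniformly bounded on $\Gamma$ for all sufficiently large $n$, and converge uniformly to $(zI - T)^{-1}$ there. Hence the analogous projections $E_n$ are well-defined, $\|E_n - E\|_{\mathcal L(L;L)}\to 0$, and $\dim(E_n L) = m$ for $n$ large. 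This yields exactly $m$ eigenvalues $\mu_{n,1},\ldots,\mu_{n,m}$ of $T_n$ inside $\Gamma$ (counted with algebraic multiplicity), and shrinking the radius of $\Gamma$ gives $\mu_{n,j}\to\mu$.

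\textbf{Reduction to $G_\mu$.} Since $E_n|_{G_\mu}:G_\mu\to E_nL$ is a small perturbation of the identity, it is an isomorphism for large $n$, and the $m$ eigenvalues $\mu_{n,j}$ coincide with those of
\[
\widehat{T}_n := (E_n|_{G_\mu})^{-1}\, T_n E_n|_{G_\mu} : G_\mu \to G_\mu.
\]
Using $E_n T_n = T_n E_n$, $ET = TE$, and $E|_{G_\mu} = I$, a direct computation splits the perturbation as
\[
\widehat{T}_n - T|_{G_\mu} = (E_n|_{G_\mu})^{-1}\bigl[ E_n(T_n - T)|_{G_\mu} + (E_n - E)\,T|_{G_\mu}\bigr].
\]
Next I would fix a Jordan basis $\{\phi_j\}_{j=1}^m$ of $G_\mu$ for $T|_{G_\mu}$ together with a biorthogonal basis $\{\psi_l\}_{l=1}^m$ of $G_\mu^*$ with $(\phi_j,\psi_l)_L = \delta_{jl}$, which exists because the duality between $G_\mu$ and $G_\mu^*$ induced by $(\cdot,\cdot)_L$ is non-degenerate. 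The matrix entries of $N := \widehat{T}_n - T|_{G_\mu}$ in these bases can then be estimated by the right-hand side of \eqref{eq:cv_eigenval}: the leading term reduces to $((T_n - T)\phi_j,\psi_l)_L$ plus a correction, while the term involving $E_n - E$ is re-expressed using the contour identity
\[
E_n - E = \frac{1}{2\pi\mathrm{i}}\oint_\Gamma (zI - T_n)^{-1}(T_n - T)(zI - T)^{-1}\,dz,
\]
which, once paired against $\psi_l\in G_\mu^*$, produces a factor of $(T-T_n)^*$ acting on $G_\mu^*$ and thus the quadratic contribution $\|(T-T_n)|_{G_\mu}\|\cdot\|(T-T_n)^*|_{G_\mu^*}\|$.

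\textbf{Eigenvalue bounds.} The trace identity $m\langle\mu_n\rangle = \mathrm{tr}(\widehat{T}_n) = m\mu + \mathrm{tr}(N)$ immediately gives $|\mu - \langle\mu_n\rangle| \le m^{-1}|\mathrm{tr}(N)|$, which is linear in the matrix entries of $N$. For the individual eigenvalues, the Jordan structure of $T|_{G_\mu}$, whose blocks have size at most $\alpha$, implies that the characteristic polynomial of $\widehat{T}_n$ is a perturbation of $(z-\mu)^m$ by coefficients bounded in terms of $\|N\|$; a standard root-perturbation argument then yields $\max_j |\mu - \mu_{n,j}|^\alpha \le C\|N\|$. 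The main obstacle I anticipate is in Stage 2, namely arranging the bookkeeping so that the correction arising from $E_n - E$ factors as a \emph{product} of one norm on $G_\mu$ and one norm on $G_\mu^*$, rather than as a single first power of $\|T - T_n\|_{\mathcal L(L;L)}$; this asymmetric factorization is what makes the bound sharp and is crucial for the eigenfunction estimates derived later in the paper.
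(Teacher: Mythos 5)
Your proposal reconstructs the classical Osborn/Descloux argument, which is precisely the source the paper relies on: the paper states Theorem~\ref{thm:eve0} \emph{without proof}, as a recalled result from the abstract spectral approximation theory of \cite{osborn1975spectral,descloux1978spectral,babuvska1991eigenvalue}, so your sketch is "the paper's proof" in the only meaningful sense. The three-stage structure (Riesz projections and rank stability under norm convergence, similarity reduction to $\widehat T_n$ on $G_\mu$, then the trace identity for the mean and a root-perturbation argument with the Jordan-block size $\alpha$ for the individual eigenvalues) is exactly right, and you correctly isolate the one genuinely delicate point, namely that the second-order remainder must factor as $\|(T-T_n)|_{G_\mu}\|\,\|(T-T_n)^*|_{G_\mu^*}\|$ rather than degrade to a single power of $\|T-T_n\|_{\mathcal L(L;L)}$; this is handled in the literature by the adjoint counterpart of the estimate $\|(E-E_n)|_{G_\mu}\|_{\mathcal L(G_\mu;L)}\le C\|(T-T_n)|_{G_\mu}\|_{\mathcal L(G_\mu;L)}$, which your contour representation of $E_n-E$ delivers.

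One local correction: your displayed decomposition of the perturbation does not typecheck. Since $ET|_{G_\mu}=T|_{G_\mu}$, the bracket $E_n(T_n-T)|_{G_\mu}+(E_n-E)T|_{G_\mu}$ equals $E_nT_n|_{G_\mu}-T|_{G_\mu}$, which does not take values in $E_nL$, so $(E_n|_{G_\mu})^{-1}$ cannot be applied to it; and even formally the result is not $\widehat T_n-T|_{G_\mu}$. Using the commutation $T_nE_n=E_nT_n$ and the invariance $T(G_\mu)\subset G_\mu$ one gets the simpler correct identity
\begin{equation*}
\widehat T_n - T|_{G_\mu} = (E_n|_{G_\mu})^{-1}E_n(T_n-T)|_{G_\mu},
\end{equation*}
from which the matrix entries in the biorthogonal bases split as $((T_n-T)\phi_j,\psi_l)_L$ plus a remainder controlled by the product of the two restricted operator norms. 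With that repair the argument closes as you describe.
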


\begin{remark}[Convergence of the arithmetic mean]
Note that \eqref{eq:cv_eigenval} shows that for $\alpha\ge 2$, the arithmetic mean of the eigenvalues has a better convergence rate than each eigenvalue individually. 
\end{remark}

\begin{theorem}[Convergence of the eigenfunctions] \label{thm:efe0}
Let $\mu\in \sigma(T)\setminus\{0\}$ with  
ascent $\alpha$ and algebraic multiplicity $m$.
Let $\mu_{n, j}$ be an eigenvalue of $T_n$ that converges to $\mu$. Let $w_{n,j}$ be a unit vector in $\text{ker}(\mu_{n,j}I-T_n)^\ell$ for some positive integer $\ell \le \alpha$. Then, for any integer $r$ with $\ell \le r \le \alpha$, there is a vector $u_r \in \text{\em ker}(\mu I- T)^r \subset G_\mu$ such that
\begin{equation}
\|u_r - w_{n,j}\|_L \le C \|(T-T_n)|_{G_\mu}\|_{\mathcal{L}(G_\mu; L)}^{\frac{r - \ell + 1}{\alpha}},
\end{equation}
where $C$ depends on $\mu$ but is independent of $n$.
\end{theorem}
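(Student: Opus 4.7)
The plan is the classical Riesz-projection approach of Babu\v{s}ka--Osborn. Fix a small closed contour $\Gamma\subset\mathbb{C}$ surrounding $\mu$ and no other point of $\sigma(T)$, and define the spectral projections
$$E=\frac{1}{2\pi i}\oint_\Gamma(zI-T)^{-1}\,dz,\qquad E_n=\frac{1}{2\pi i}\oint_\Gamma(zI-T_n)^{-1}\,dz.$$
By the norm convergence $T_n\to T$, for $n$ large $\Gamma$ encloses exactly the $m$ eigenvalues $\mu_{n,1},\ldots,\mu_{n,m}$ of $T_n$ provided by Theorem~\ref{thm:eve0}; one has $\mathrm{Range}(E)=G_\mu$ and $\mathrm{Range}(E_n)=M_n\ni w_{n,j}$.

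Set $\tilde u:=Ew_{n,j}\in G_\mu$ and let $u_r$ be a best approximation of $\tilde u$ from the subspace $\ker(\mu I-T)^r\subset G_\mu$. By the triangle inequality, $\|u_r-w_{n,j}\|_L\le\|u_r-\tilde u\|_L+\|\tilde u-w_{n,j}\|_L$. For the second term, writing $\tilde u-w_{n,j}=(E-E_n)w_{n,j}$ as a contour integral and using the resolvent identity $(zI-T)^{-1}-(zI-T_n)^{-1}=(zI-T)^{-1}(T-T_n)(zI-T_n)^{-1}$ gives $\|\tilde u-w_{n,j}\|_L\le C\|(T-T_n)|_{G_\mu}\|$, which already suffices since the target exponent $(r-\ell+1)/\alpha$ never exceeds $1$.

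The heart of the proof is the first term. On the finite-dimensional $G_\mu$, the seminorm $v\mapsto\|(\mu I-T)^r v\|_L$ has kernel exactly $\ker(\mu I-T)^r$, so norm equivalence yields $\|u_r-\tilde u\|_L\le C\|(\mu I-T)^r\tilde u\|_L$. Since $E$ commutes with $T$, one has $(\mu I-T)^r\tilde u=E(\mu I-T)^r w_{n,j}$. Using that $w_{n,j}\in\ker(\mu_{n,j}I-T_n)^\ell$, the resolvent has the finite Laurent expansion
$$(zI-T_n)^{-1}w_{n,j}=\sum_{k=0}^{\ell-1}\frac{(T_n-\mu_{n,j})^k w_{n,j}}{(z-\mu_{n,j})^{k+1}}.$$
Substituting into the Cauchy formula for $E$, combined with the resolvent identity for the $(T-T_n)$ piece, produces the decomposition
$$(\mu I-T)^r\tilde u=\sum_{k=0}^{\ell-1}\binom{r}{k}(\mu-\mu_{n,j})^{r-k}(T_n-\mu_{n,j})^k w_{n,j}+R_n,$$
with a remainder $R_n$ of order $\|(T-T_n)|_{G_\mu}\|$. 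The eigenvalue estimate $|\mu-\mu_{n,j}|\le C\|(T-T_n)|_{G_\mu}\|^{1/\alpha}$ from Theorem~\ref{thm:eve0}, applied to each $(\mu-\mu_{n,j})^{r-k}$ (for which $k\le\ell-1$ implies $r-k\ge r-\ell+1$), then gives $\|(\mu I-T)^r\tilde u\|_L\le C\|(T-T_n)|_{G_\mu}\|^{(r-\ell+1)/\alpha}$, which combined with the triangle inequality yields the claim.

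The main obstacle will be the double-pole bookkeeping in the Cauchy formula for $(\mu I-T)^r\tilde u$: a pole at $\mu$ arises from $(zI-T)^{-1}$ and one at $\mu_{n,j}$ from the Laurent expansion of $(zI-T_n)^{-1}w_{n,j}$. One must verify that the weight $(z-\mu)^r$ produced by $(\mu I-T)^r$ is sufficient to tame both singularities, so that the remainder $R_n$ is indeed of the stated size and does not swamp the principal sum. The final exponent $(r-\ell+1)/\alpha$ then emerges from the interplay between the slow eigenvalue rate $\|(T-T_n)|_{G_\mu}\|^{1/\alpha}$ and the linear rate on $(T-T_n)|_{G_\mu}$ itself.
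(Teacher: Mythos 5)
The paper offers no proof of Theorem~\ref{thm:efe0}: it is recalled as a known result from the abstract spectral approximation theory of compact operators (it is Osborn's eigenfunction theorem; see \cite{osborn1975spectral,babuvska1991eigenvalue}), so there is no in-paper argument to compare against. Your outline is, in substance, exactly the classical proof: Riesz--Dunford spectral projections $E$, $E_n$; the quotient-norm equivalence $\mathrm{dist}(v,\ker(\mu I-T)^r)\le C\|(\mu I-T)^r v\|_L$ on the finite-dimensional $G_\mu$; and the splitting of $(\mu I-T)^r Ew_{n,j}$ into a principal part controlled by $|\mu-\mu_{n,j}|^{r-\ell+1}\le C\|(T-T_n)|_{G_\mu}\|^{(r-\ell+1)/\alpha}$ (via Theorem~\ref{thm:eve0}) plus a remainder of order $\|(T-T_n)|_{G_\mu}\|$. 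Your contour computation of the principal part is correct up to a harmless factor $(-1)^k$: writing $(\mu I-T)^r(zI-T)^{-1}=\sum_{p=0}^r\binom{r}{p}(\mu-z)^{r-p}(zI-T)^{p-1}$, only the $p=0$ term survives the integration, and the residue of $(\mu-z)^r(z-\mu_{n,j})^{-k-1}$ at $\mu_{n,j}$ yields the binomial sum. The ``double-pole'' worry at the end is unfounded: $(\mu-z)^r$ is analytic, so after the resolvent identity the principal term has its only singularity at $\mu_{n,j}$.

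The one step you pass over too quickly is the transition from $M_n:=\mathrm{Range}(E_n)$ to $G_\mu$. Both $\|(E-E_n)w_{n,j}\|_L$ and your remainder $R_n$ emerge from the resolvent identity with $(T-T_n)$ applied to $(zI-T_n)^{-1}w_{n,j}$, which lives in $M_n$, not in $G_\mu$; what you get directly is therefore a bound by $\|(T-T_n)|_{M_n}\|$. To convert this into $\|(T-T_n)|_{G_\mu}\|$ one needs the standard intermediate lemma: applying the resolvent identity in the order that places $(zI-T)^{-1}$ (which preserves $G_\mu$) next to the argument gives $\|(E-E_n)|_{G_\mu}\|\le C\|(T-T_n)|_{G_\mu}\|$; since $\dim M_n=\dim G_\mu=m$ for $n$ large, this controls the gap between the two subspaces, and then any unit vector of $M_n$ differs from a bounded vector of $G_\mu$ by $O(\|(T-T_n)|_{G_\mu}\|)$, whence $\|(T-T_n)|_{M_n}\|\le C\|(T-T_n)|_{G_\mu}\|$. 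This is Osborn's Theorems~1--2 and is routine, but it must be inserted for the stated right-hand side to be legitimate; with it, your argument is complete.
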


\section{HHO discretization} \label{sec:hho}

In this section we present the discrete setting underlying the HHO discretization and then we describe the discretization of the source problem~\eqref{eq:source_weak} and of the eigenvalue problem~\eqref{eq:eigen_weak} by the HHO method. The HHO discretization of the source problem has been introduced and analyzed in \cite{DiPEL2014arbitrary}; herein, we complete the error analysis by addressing the case where the solution has minimal elliptic regularity pickup. The devising and analysis of the HHO discretization of the eigenvalue problem is the main subject of this work.

\subsection{Discrete setting}

Let $\K$ be a partition of $\Omega$ into non-overlapping mesh cells. A generic mesh cell is denoted by $K$ and can be a $d$-dimensional polytope with planar faces. In what follows, we assume that $\Omega$ is also a polytope in $\mathbb{R}^d$ with planar faces, so that the mesh can cover $\Omega$ exactly. For all $K\in\K$, we let $\bm{n}_K$ denote the unit outward vector to $K$. We say that $F\subset \mathbb{R}^d$ is a mesh face if it is a subset with nonempty relative interior of some affine hyperplane $H_F$ and if one of the two following conditions holds true: either there are two distinct mesh cells $K_1,K_2\in \K$ so that $F=\partial K_1\cap \partial K_2\cap H_F$ and $F$ is called an interface or there is one mesh cell $K\in\K$ so that $F=\partial K\cap \partial \Omega\cap H_F$ and $F$ is called a boundary face. The mesh faces are collected in the set $\F$, interfaces in the set $\F^{\mathrm{i}}$, and boundary faces in the set $\F^{\mathrm{b}}$. We let $h_S$ denote the diameter of the set $S$ which can be a mesh cell or a mesh face. We assume that the mesh $\K$ is a member of a shape-regular polytopal mesh family in the sense specified in \cite{DiPEL2014arbitrary,DiPEr2015hybridl}. In a nutshell, there is a matching simplicial submesh of $\K$ that belongs to a shape-regular family of simplicial meshes in the usual sense of Ciarlet \cite{ciarlet1978finite} and such that each cell $K\in \K$ (resp., face $F\in \F$) can be decomposed in a finite number of sub-cells (resp., sub-faces) with uniformly comparable diameter.

The HHO method is defined locally in each mesh cell $K \in \K$ from a pair of local unknowns which consist of one polynomial attached to the cell $K$ and a piecewise polynomial attached to the boundary $\partial K$, i.e., one polynomial attached to each face $F$ composing the boundary of $K$. Let $k \ge 0$ be a polynomial degree, and let $\mathbb{P}_{d'}^k(S)$, with $d'\in\{d-1,d\}$ be the linear space composed of real-valued polynomials of total degree at most $k$ on the $d'$-dimensional affine manifold $S\subset \mathbb{R}^d$ ($S$ is typically a mesh face or a mesh cell). The local discrete HHO pair is denoted 
\begin{equation} \label{eq:lsols}
\hat v_K = (v_K, v_{\partial K}) \in 
\hat V_K^k := \mathbb{P}_d^k(K) \times \mathbb{P}_{d-1}^k(\FK),
\end{equation}
where
\begin{equation}
\mathbb{P}_{d-1}^k(\FK) = \bigtimes_{F \in \FK} \mathbb{P}_{d-1}^k(F),
\end{equation} 
and $\FK$ is the collection of all the faces composing the cell boundary $\partial K$.
There is actually some flexibility in the choice of the polynomial degree for the cell unknowns since one can take them to be polynomials of degree $l\in\{k-1,k,k+1\}$ \cite{CoDPE2016bridging}. For simplicity, we only consider the case $l=k$; all what follows readily extends to the other choices for $l$. In what follows, we always use hat symbols to indicate discrete HHO pairs.

There are two key ingredients to devise locally the HHO method: a local reconstruction operator and a local stabilization operator.
The local reconstruction operator is defined as $p_K^{k+1}: \hat V_K^k \to \mathbb{P}_d^{k+1}(K)$ such that for all $\hat v_K = (v_K, v_{\partial K}) \in \hat V_K^k$, we have
\begin{equation} \label{eq:localnp}
(\nabla p_K^{k+1}(\hat v_K),\nabla w)_{L^2(K)} = (\nabla v_K,\nabla w)_{L^2(K)} + ( v_{\partial K} - v_K, \nabla w {\cdot} \bm{n}_{K})_{L^2(\partial K)}, 
\end{equation}
for all $w\in \mathbb{P}_d^{k+1}(K)$. The above Neumann problem uniquely defines $p_K^{k+1}  (\hat v_K) \in \mathbb{P}_d^{k+1}(K)$ up to an additive constant which can be specified by additionally requiring that $(p_K^{k+1}  (\hat v_K)-v_K,1)_{L^2(K)} = 0$ (this choice is irrelevant in what follows). 
Concerning stabilization, we define the local operator $S_{\partial K}^k : \hat V_K^k \to \mathbb{P}_{d-1}^k(\FK)$ such that, for all $\hat v_K = (v_K, v_{\partial K}) \in \hat V_K^k$, we have
\begin{equation} \label{eq:lbst}
S_{\partial K}^k(\hat v_K) = \Pi^k_{\partial K} (v_{\partial K}- p_K^{k+1}(\hat v_K)_{|\partial K} ) - \Pi_K^k(v_K-p_K^{k+1}(\hat v_K))_{|\partial K}, 
\end{equation}
where
 $\Pi^k_K$ and $\Pi_{\partial K}^k$ denote the $L^2$-orthogonal projectors from $L^1(K)$ onto $\mathbb{P}_d^k(K)$ and from $L^1(\dK)$ onto $\mathbb{P}_{d-1}^k(\FK)$, respectively. Equivalently, we have $S_{\partial K}^k(\hat v_K) = \Pi^k_{\partial K}(v_{\partial K}-P_K^{k+1}(\hat v_K)_{|\partial K})$ with $P_K^{k+1}(\hat v_K)=v_K+(I-\Pi_K^k)(p_K^{k+1}(\hat v_K))$, which is \cite[Eq.~(22)]{DiPEL2014arbitrary}.
Finally, the local HHO bilinear form for the stiffness is such that, for all $\hat v_K = (v_K, v_{\partial K}) \in \hat V_K^k$ and all $\hat w_K = (w_K, w_{\partial K}) \in \hat V_K^k$, we have
\begin{equation} \label{eq:lbs}
\hat a_K(\hat v_K, \hat w_K) = (\nabla p_K^{k+1}(\hat v_K),\nabla p_K^{k+1}(\hat w_K))_{L^2(K)} + (\tau_{\partial K}S_{\partial K}^k(\hat v_K),S_{\partial K}^k(\hat w_K))_{L^2(\partial K)},
\end{equation}
where $\tau_{\partial K}$ denotes the piecewise constant function on $\partial K$ such that $\tau_{\partial K|F}=\eta h_F^{-1}$ for all $F\in \FK$, and $\eta>0$ is a user-specified positive stabilization parameter (the simplest choice is to set $\eta=1$). 

\subsection{HHO discretization of the source problem}

To discretize the source problem~\eqref{eq:source_weak} using the HHO method, we consider the following global space of discrete HHO pairs:
\begin{equation} \label{eq:def_Vhk}
\hat V_h^k = V_{\K}^k \times V_{\F}^k,
\qquad
V^k_{\K} = \bigtimes_{K \in \K} \mathbb{P}_d^k(K),
\qquad
V^k_{\F} = \bigtimes_{F \in \F} \mathbb{P}_{d-1}^k(F).
\end{equation}
Here, the subscript $h$ refers to the global mesh-size defined as $h=\max_{K\in \K} h_K$.
For a global HHO pair $\hat v_h=(v_{\K},v_{\F}) \in \hat V^k_h$ with 
$v_{\K}\in V^k_{\K}$ and $v_{\F}\in V^k_{\F}$, we denote by 
$\hat v_K=(v_K,v_{\dK})\in\hat V_K^k$ the local HHO pair associated with the mesh cell $K\in\K$, and we denote by $v_F\in \mathbb{P}_{d-1}^k(F)$ the component associated with the mesh face $F\in \F$. The homogeneous Dirichlet boundary condition can be embedded into the HHO space by considering the subspaces 
\begin{equation} \label{eq:def_Vh0}
\hat V_{h,0}^k := V_\K^k \times V_{\F,0}^k, \qquad
V_{\F,0}^k := \{ v_\F \in V_{\F}^k \;|\; v_F = 0, \ \forall F\in \F^{\mathrm{b}} \}.
\end{equation} 
The HHO discretization of the source problem with $\phi\in L^2(\Omega)$ reads as follows: Find $\hat u_h \in \hat V_{h,0}^k$ such that
\begin{equation}
\label{eq:HHO_source}
\hat a_h(\hat u_h,\hat w_h) = b(\phi,w_{\K}), \qquad \forall \hat w_h=(w_{\K},w_{\F})\in \hat V_{h,0}^k,
\end{equation}
where
\begin{equation} \label{eq:def_hatah}
\hat a_h(\hat v_h,\hat w_h) = \sum_{K\in\K} \hat a_K(\hat v_K,\hat w_K), \qquad \forall \hat v_h,\hat w_h\in \hat V_h.
\end{equation}

The algebraic realization of the discrete source problem~\eqref{eq:HHO_source} leads to a symmetric linear system which can be written in the following block form where unknowns attached to the mesh cells are ordered before unknowns attached to the mesh faces: 
\begin{equation} \label{eq:m_source}
\begin{bmatrix}
\mathbf{A}_{\mathcal{KK}} & \mathbf{A}_{\mathcal{KF}} \\
\mathbf{A}_{\mathcal{FK}} & \mathbf{A}_{\mathcal{FF}} \\
\end{bmatrix}
\begin{bmatrix}
\mathbf{U}_\K \\
\mathbf{U}_\F\\
\end{bmatrix}
= 
\begin{bmatrix}
\bm{\phi}_{\K} \\ \mathbf{0}
\end{bmatrix}.
\end{equation}
The system matrix is positive-definite owing to the coercivity of the bilinear form $\hat a_h$; see~\eqref{eq:hatah_coer} below.
A computationally-effective way to solve the above linear system is to use a Schur complement technique, also known as static condensation, where the cell unknowns are eliminated by expressing them locally in terms of the face unknowns. This elimination is simple since the block-matrix $\mathbf{A}_{\mathcal{KK}}$ is block-diagonal. The resulting linear system in terms of the face unknowns is
\begin{equation} \label{eq:Schur_source}
\mathbf{K}_{\mathcal{FF}} \mathbf{U}_\F = - \mathbf{A}_{\mathcal{FK}}\mathbf{A}_{\mathcal{KK}}^{-1}\bm{\phi}_{\K},
\end{equation}
with the Schur complement matrix $\mathbf{K}_{\mathcal{FF}} = \mathbf{A}_{\mathcal{FF}}-\mathbf{A}_{\mathcal{FK}}\mathbf{A}_{\mathcal{KK}}^{-1}\mathbf{A}_{\mathcal{KF}}$.
As shown in~\cite{CoDPE2016bridging}, the linear system~\eqref{eq:Schur_source} is a global transmission problem (in which a given mesh face is locally coupled to the other mesh faces with which it shares a mesh cell) that expresses the equilibration of a suitable flux across all the mesh interfaces.

\subsection{HHO discretization of the eigenvalue problem}
The HHO discretization of the eigenvalue problem~\eqref{eq:eigen_weak} consists of finding the discrete eigenpairs $(\lambda_h,\hat u_h)\in \mathbb{R}_{>0} \times \hat V_{h,0}^k$ such that
\begin{equation} \label{eq:vfh}
\hat a_h(\hat u_h, \hat w_h) =  \lambda_h b(u_{\K},w_{\K}), \qquad \forall \hat w_h=(w_{\K},w_{\F})\in \hat V_{h,0}^k.
\end{equation}
One key idea here is that the mass bilinear form on the right-hand side of~\eqref{eq:vfh} only involves discrete cell unknowns.

The algebraic realization of~\eqref{eq:vfh} is the matrix eigenvalue problem
\begin{equation} \label{eq:mevp}
\begin{bmatrix}
\mathbf{A}_{\mathcal{KK}} & \mathbf{A}_{\mathcal{KF}} \\
\mathbf{A}_{\mathcal{FK}} & \mathbf{A}_{\mathcal{FF}} \\
\end{bmatrix}
\begin{bmatrix}
\mathbf{U}_\K \\
\mathbf{U}_\F\\
\end{bmatrix}
= \lambda_h
\begin{bmatrix}
\mathbf{B}_{\mathcal{KK}} & \mathbf{0}\\
\mathbf{0} & \mathbf{0} \\
\end{bmatrix}
\begin{bmatrix}
\mathbf{U}_\K \\
\mathbf{U}_\F\\
\end{bmatrix}.
\end{equation}
Since the face unknowns do not carry any mass, they can be eliminated, leading to the following matrix eigenvalue problem solely in terms of the cell unknowns:
\begin{equation}
\mathbf{K}_{\mathcal{KK}} \mathbf{U}_\K = \lambda_h \mathbf{B}_{\mathcal{KK}} \mathbf{U}_\K,
\end{equation}
with the Schur complement matrix $\mathbf{K}_{\mathcal{KK}} = \mathbf{A}_{\mathcal{KK}}-\mathbf{A}_{\mathcal{KF}}\mathbf{A}_{\mathcal{FF}}^{-1}\mathbf{A}_{\mathcal{FK}}$. Therefore, there are as many discrete eigenpairs as there are cell unknowns, i.e., the dimension of the polynomial space $\mathbb{P}_d^k$ times the number of mesh cells.

\subsection{HHO solution operators} 

We now introduce the key operators that play a central role in the analysis of the HHO approximation of the eigenvalue problem. To motivate the approach, we observe that for the source problem~\eqref{eq:HHO_source}, one can consider the cell-face HHO solution operator $\hat T_h : L^2(\Omega) \rightarrow \hat V_{h,0}^k$ so that 
\begin{equation} \label{eq:def_hatTh}
\hat a_h(\hat T_h(\phi),\hat w_h) = b(\phi,w_{\K}), \qquad \forall \hat w_h=(w_{\K},w_{\F})\in \hat V_{h,0}^k. 
\end{equation}
However, this operator is not convenient to analyze the approximation of the eigenvalue problem since it does not map to a subspace of $L^2(\Omega)$. The key idea is then to introduce a cell HHO solution operator $T_\K : L^2(\Omega) \rightarrow V_\K^k \subset L^2(\Omega)$  by mimicking the elimination of the face unknowns presented above at the algebraic level for the eigenvalue problem.  

As a first step, we define the operator $\ZF : V_\K^k \rightarrow V_{\F,0}^k$ so that, for all $v_\K\in V_\K^k$, $\ZF(v_\K)\in V_{\F,0}^k$ is defined as the unique solution of
\begin{equation} \label{eq:def_ZF}
\hat a_h((v_\K,\ZF(v_\K)),(0,w_\F)) = 0, \qquad \forall w_\F \in V_{\F,0}^k.
\end{equation}
To allow for some generality, we also define the operator $\ZFd : V_\K^k \rightarrow V_{\F,0}^k$ so that
\begin{equation} \label{eq:def_ZFd}
\hat a_h((0,w_\F),(v_\K,\ZFd(v_\K))) = 0, \qquad \forall w_\F \in V_{\F,0}^k.
\end{equation}
In the present setting where the bilinear form $\hat a_h$ is symmetric, the two operators $\ZF$ and $\ZFd$ coincide. As a second step, we define the bilinear form $a_\K$ on $V_\K^k\times V_\K^k$ such that
\begin{equation} \label{eq:def_aK}
a_\K(v_\K,w_\K) = \hat a_h((v_\K,\ZF(v_\K)),(w_\K,\ZFd(w_\K)),
\end{equation}
and introduce the solution operator $T_\K : L^2(\Omega) \rightarrow V_\K^k$ so that
\begin{equation} \label{eq:def_TK}
a_\K(T_\K(\phi),w_\K) =
\ b(\phi,w_K), \qquad \forall w_\K\in V_\K^k.
\end{equation}

\begin{lemma}[HHO solution operator] \label{lem:TK}
The following holds true:
\begin{equation}
\hat T_h(\phi) = (T_\K(\phi),(\ZF\circ T_\K)(\phi)), \qquad \forall \phi \in L^2(\Omega).
\end{equation}
\end{lemma}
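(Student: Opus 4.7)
The plan is to verify directly that the pair $(T_\K(\phi),(\ZF\circ T_\K)(\phi))$ satisfies the defining variational problem~\eqref{eq:def_hatTh} for $\hat T_h(\phi)$. Since the bilinear form $\hat a_h$ is coercive on $\hat V_{h,0}^k$ (a fact referenced in~\eqref{eq:hatah_coer}), the solution $\hat T_h(\phi)$ is unique, so this verification proves the claim.

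First I would exploit the product structure $\hat V_{h,0}^k = V_\K^k \times V_{\F,0}^k$ to split an arbitrary test function as $\hat w_h = (w_\K,0) + (0,w_\F)$. For the purely face-based contribution $(0,w_\F)$, the definition~\eqref{eq:def_ZF} of $\ZF$, applied with $v_\K = T_\K(\phi)$, yields immediately
\begin{equation*}
\hat a_h\bigl((T_\K(\phi),(\ZF\circ T_\K)(\phi)),(0,w_\F)\bigr) = 0,
\end{equation*}
which matches the right-hand side $b(\phi, 0) = 0$ of~\eqref{eq:def_hatTh} for this test.

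Next, for the cell-based contribution $(w_\K,0)$, I would use the bilinearity of $\hat a_h$ together with the definition~\eqref{eq:def_ZFd} of $\ZFd$ to write
\begin{equation*}
\hat a_h\bigl((T_\K(\phi),(\ZF\circ T_\K)(\phi)),(w_\K,0)\bigr) = \hat a_h\bigl((T_\K(\phi),(\ZF\circ T_\K)(\phi)),(w_\K,\ZFd(w_\K))\bigr) - \hat a_h\bigl((T_\K(\phi),(\ZF\circ T_\K)(\phi)),(0,\ZFd(w_\K))\bigr).
\end{equation*}
The second term on the right vanishes by the very same application of~\eqref{eq:def_ZF} as above (since $\ZFd(w_\K)\in V_{\F,0}^k$), whereas the first term equals $a_\K(T_\K(\phi),w_\K)$ by the definition~\eqref{eq:def_aK} of the cell bilinear form $a_\K$, which in turn equals $b(\phi,w_\K)$ by the definition~\eqref{eq:def_TK} of $T_\K$. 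Summing the two contributions recovers~\eqref{eq:def_hatTh}, and uniqueness concludes the proof.

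The step requiring the most care is the reconstruction of the cell contribution: one must add back and subtract the auxiliary face piece $\ZFd(w_\K)$ in order to call on the definitions of $a_\K$ and $T_\K$. This argument is slightly delicate precisely because the authors kept $\ZF$ and $\ZFd$ distinct for generality; in the current symmetric setting they coincide, but the bookkeeping above works identically and in fact explains why introducing the adjoint operator $\ZFd$ is the natural device for defining $a_\K$ as a symmetric bilinear form.
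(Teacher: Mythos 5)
Your proposal is correct and follows essentially the same route as the paper's own proof: split the test function into its cell and face parts, use the definition of $\ZF$ to kill the face contribution and to insert the auxiliary piece $(0,\ZFd(w_\K))$ for free, then invoke the definitions of $a_\K$ and $T_\K$, concluding by uniqueness of the solution to~\eqref{eq:def_hatTh}. Your explicit appeal to coercivity for uniqueness is a point the paper leaves implicit, but the argument is the same.
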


\begin{proof}
Let $\phi \in L^2(\Omega)$. Let us set $u_\K = T_\K(\phi)$ so that $u_\K\in V_\K^k$ and $a_\K(u_\K,w_\K)=b(\phi,w_\K)$, for all $w_\K\in V_\K^k$, and set $u_\F = (\ZF\circ T_\K)(\phi) = \ZF(u_\K)$ so that $u_\F\in V_{\F,0}^k$. Setting $\hat u_h=(u_\K,u_\F) \in \hat V_{h,0}^k$, we need to verify that $\hat u_h$ solves the discrete HHO source problem, i.e.,
\[
\hat a_h(\hat u_h,\hat w_h) = b(\phi,w_\K), \qquad \forall \hat w_h=(w_{\K},w_{\F})\in \hat V_{h,0}^k. 
\]
Considering first a test function in the form $\hat w_h=(w_\K,0)$, we obtain
\begin{align*}
\hat a_h(\hat u_h,(w_\K,0)) &= \hat a_h((u_\K,\ZF(u_\K)),(w_\K,0)) \\
&= \hat a_h((u_\K,\ZF(u_\K)),(w_\K,0)) + \underbrace{\hat a_h((u_\K,\ZF(u_\K)),(0,\ZFd(w_\K)))}_{=0} \\
&= \hat a_h((u_\K,\ZF(u_\K)),(w_\K,\ZFd(w_\K))) \\
&= a_\K(u_\K,w_\K) = b(\phi,w_\K), 
\end{align*}
where we used the definition~\eqref{eq:def_ZF} of $\ZF$ in the second line and the definition~\eqref{eq:def_aK} of $a_\K$ in the fourth line.
Considering now a test function in the form $\hat w_h=(0,w_\F)$, we obtain owing to~\eqref{eq:def_ZF} that
\begin{align*}
\hat a_h(\hat u_h,(0,w_\F)) & = \hat a_h((u_\K,\ZF(u_\K)),(0,w_\F)) = 0.
\end{align*}
This completes the proof.
\end{proof}

The cell HHO solution operator $T_\K$ defined in~\eqref{eq:def_TK} is the relevant solution operator for the discrete eigenvalue problem~\eqref{eq:vfh}. Indeed, the eigenpair $(\lambda_h,\hat u_h)\in \mathbb{R}_{>0} \times \hat V_{h,0}^k$ with $\hat u_h=(u_\K,u_\F)\in V_\K^k\times V_{\F,0}^k$ solves~\eqref{eq:vfh} if and only if $u_\F=\ZF(u_\K)$ and the pair $(\lambda_h,u_\K)\in \mathbb{R}_{>0} \times V_\K^k$ solves
\begin{equation} \label{eq:celled_evp}
a_\K(u_\K,w_\K) = \lambda_h b(u_\K,w_\K), \qquad \forall w_\K \in V_\K^k,
\end{equation} 
that is, if and only if $(\mu_h,u_\K) \in \mathbb{R}_{>0} \times V_\K^k$ with $\mu_h = \lambda_h^{-1}$ is an eigenpair of the discrete solution operator $T_\K$.

\subsection{Error analysis for the source problem}

In this section we briefly outline the analysis of the HHO discretization of the source problem drawing on the ideas introduced in~\cite{DiPEL2014arbitrary}. One difference here is to include the case when the exact solution has a smoothness index $s\in (\frac12,k+2]$ and not just $s=k+2$ (recall that $s>\frac12$ follows from the elliptic regularity theory). In what follows, we use the symbol $C$ to denote a generic constant (its value can change at each occurrence) that can depend on the mesh regularity, the polynomial degree $k$ and the domain $\Omega$, but is independent of the mesh-size $h$. 

Let $K\in\K$ be a mesh cell. We equip the local HHO space $\hat V_K^k$ defined in~\eqref{eq:lsols} with the following seminorm (which is an HHO counterpart of the $H^1(K)$-seminorm)
\begin{equation} \label{eq:def_normK}
\|\hat v_K\|_{\hat V_K^k}^2 = \|\nabla v_K\|_{L^2(K)}^2 + \|\tau_{\dK}^{\frac12}(v_K-v_{\dK})\|_{L^2(\dK)}^2,
\end{equation}
for all $\hat v_K=(v_K,v_{\dK})\in \hat V_K^k$. We observe that $\|\hat v_K\|_{\hat V_K^k}=0$ implies that $v_K$ and $v_{\dK}$ are constant functions taking the same value. We equip the global HHO space $\hat V_h^k$ defined in~\eqref{eq:def_Vhk} with the seminorm (which is an HHO counterpart of the $H^1(\Omega)$-seminorm)
\begin{equation} \label{eq:def_normh}
\|\hat v_h\|_{\hat V_h^k}^2 = \sum_{K\in\K} \|\hat v_K\|_{\hat V_K^k}^2, \qquad \forall \hat v_h\in \hat V_h^k.
\end{equation}
The map $\|\cdot\|_{\hat V_h^k}$ is a norm on the subspace $\hat V_{h,0}^k$ defined in~\eqref{eq:def_Vh0}. \cite[Lemma~4]{DiPEL2014arbitrary} shows that there is a real number $\beta>0$, uniform with respect to the mesh-size $h$, such that, for all $K\in \K$,
\begin{equation} \label{eq:norm_equiv}
\beta \|\hat v_K\|_{\hat V_K^k}^2 \le \hat a_K(\hat v_K,\hat v_K) \le \beta^{-1} \|\hat v_K\|_{\hat V_K^k}^2, \qquad \forall \hat v_K \in \hat V_K^k,
\end{equation}
and, consequently, given the definition~\eqref{eq:def_hatah} of $\hat a_h$, that the following coercivity and boundedness properties hold true:
\begin{alignat}{2} 
\hat a_h(\hat v_h,\hat v_h) &\ge \beta \|\hat v_h\|_{\hat V_h^k}^2, &\qquad&\forall \hat v_h\in\hat V_h^k, \label{eq:hatah_coer}\\
\hat a_h(\hat v_h,\hat w_h) &\le \beta^{-1} \|\hat v_h\|_{\hat V_h^k}\|\hat w_h\|_{\hat V_h^k}, &\qquad&\forall (\hat v_h,\hat w_h)\in\hat V_h^k\times \hat V_h^k. \label{eq:hatah_bnd}
\end{alignat}
Owing to the Lax--Milgram Lemma, we infer that the cell-face HHO solution operator $\hat T_h : L^2(\Omega) \rightarrow \hat V_{h,0}^k$ introduced in~\eqref{eq:def_hatTh} is well-defined. For later use in the analysis of the eigenvalue problem, we now establish a stability property for $\hat T_h$.

\begin{lemma}[Stability of $\hat T_h$] \label{lem:stab_hatTh}
There is $C$ so that 
\begin{equation}
\|\hat T_h(\phi)\|_{\hat V_h^k} \le C \|\phi\|_{L^2(\Omega)}, \qquad
\forall \phi \in L^2(\Omega).
\end{equation}
\end{lemma}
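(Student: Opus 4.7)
The plan is to use the defining relation of $\hat T_h$ together with the coercivity of $\hat a_h$ established in \eqref{eq:hatah_coer} and a discrete Poincaré--Friedrichs inequality for the HHO space $\hat V_{h,0}^k$. Specifically, setting $\hat v_h := \hat T_h(\phi) = (v_\K, v_\F) \in \hat V_{h,0}^k$, I would test the definition~\eqref{eq:def_hatTh} against $\hat w_h = \hat v_h$ itself to obtain
\begin{equation*}
\hat a_h(\hat v_h, \hat v_h) = b(\phi, v_\K) = (\phi, v_\K)_{L^2(\Omega)} \le \|\phi\|_{L^2(\Omega)} \|v_\K\|_{L^2(\Omega)}.
\end{equation*}

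Next, I would invoke the coercivity bound~\eqref{eq:hatah_coer} on the left-hand side, which gives $\beta \|\hat v_h\|_{\hat V_h^k}^2 \le \|\phi\|_{L^2(\Omega)} \|v_\K\|_{L^2(\Omega)}$. To close the estimate, the key ingredient is a discrete Poincaré--Friedrichs inequality of the form $\|v_\K\|_{L^2(\Omega)} \le C_{\mathrm{P}} \|\hat v_h\|_{\hat V_h^k}$ valid for every $\hat v_h = (v_\K, v_\F) \in \hat V_{h,0}^k$. Such an inequality is available for HHO spaces with the homogeneous Dirichlet boundary condition built into $V_{\F,0}^k$; it is essentially a cell-wise Poincaré inequality summed up after controlling jumps across faces by the stabilization seminorm, and it is established in \cite{DiPEL2014arbitrary} (see also the shape-regularity and trace inequality tools there).

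Combining these two bounds yields $\beta \|\hat v_h\|_{\hat V_h^k}^2 \le C_{\mathrm{P}} \|\phi\|_{L^2(\Omega)} \|\hat v_h\|_{\hat V_h^k}$, and dividing by $\|\hat v_h\|_{\hat V_h^k}$ (trivial if this norm vanishes) gives the stated estimate with $C = C_{\mathrm{P}}/\beta$. The only non-trivial obstacle is the discrete Poincaré--Friedrichs inequality; this is a standard but essential tool for HHO on general polytopal meshes, and I would simply cite its proof rather than reprove it here.
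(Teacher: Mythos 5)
Your proposal is correct and follows essentially the same route as the paper: coercivity of $\hat a_h$, the defining relation of $\hat T_h$ tested against itself, Cauchy--Schwarz, and a discrete Poincar\'e inequality bounding $\|v_\K\|_{L^2(\Omega)}$ by $\|\hat v_h\|_{\hat V_h^k}$. The only (minor) difference is that the paper derives this last inequality explicitly from the discontinuous Galerkin discrete Poincar\'e inequality by splitting each interface jump as $[\![ v_\K ]\!]_F = (v_{K_1}-v_F)|_F - (v_{K_2}-v_F)|_F$ and invoking the stabilization seminorm, whereas you cite it as an available HHO tool; the underlying argument is the one you sketch.
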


\begin{proof}
Let $\phi \in L^2(\Omega)$ and let us write $\hat T_h(\phi)=(u_\K,u_\F)$ with $u_\K\in V_\K^k$ and $u_\F\in V_{\F,0}^k$. 
Using the coercivity property~\eqref{eq:hatah_coer}, the definition~\eqref{eq:def_hatTh} of the solution operator $\hat T_h$, that of the bilinear form $b$, and the Cauchy--Schwarz inequality leads to
\begin{equation} \label{eq:coerc_CS}
\beta \|\hat T_h(\phi)\|_{\hat V_h^k}^2 \le \hat a_h(\hat T_h(\phi),\hat T_h(\phi)) = b(\phi,u_\K) \le \|\phi\|_{L^2(\Omega)} \|u_\K\|_{L^2(\Omega)}.
\end{equation}
On the broken polynomial space $V_\K^k$, we can apply the following discrete Poincar\'e inequality which has been derived in the discontinuous Galerkin context in~\cite{Arnol:82,Brenn:03,DiPEr:10}:
\[
C_{\mathrm{P,dG}} \|u_\K\|_{L^2(\Omega)} \le \bigg( \sum_{K\in\K} \|\nabla u_K\|_{L^2(K)}^2 + \sum_{F\in \F} h_F^{-1}\|[\!\![ u_\K ]\!\!]_F\|_{L^2(F)}^2 \bigg)^{\frac12},
\]
with $C_{\mathrm{P,dG}}>0$ uniform with respect to the mesh-size $h$, and where $[\!\![ u_\K ]\!\!]_F$ denotes the jump of $u_\K$ across $F$ if $F$ is an interface ($F\in\F^{\mathrm{i}}$) or the value of $u_\K$ on $F$ if $F$ is a boundary face ($F\in\F^{\mathrm{b}}$). If $F\in\F^{\mathrm{i}}$, we have $[\!\![ u_\K ]\!\!]_F=u_{K_1}|_F-u_{K_2}|_F$ where $K_1,K_2$ are the two mesh cells sharing $F$ (the sign of the jump is irrelevant in what follows), and we can therefore write $[\!\![ u_\K ]\!\!]_F = (u_{K_1}-u_F)|_F - (u_{K_2}-u_F)|_F$ where $u_F$ is the component of $u_\F$ attached to $F$. If $F\in \F^{\mathrm{b}}$, we have $[\!\![ u_\K ]\!\!]_F=u_{K_1}|_F$ where $K_1$ is the unique mesh cell sharing $F$ with $\partial\Omega$, and we can therefore write $[\!\![ u_\K ]\!\!]_F=(u_{K_1}-u_F)|_F$ since $u_F\equiv0$ (recall that $u_\F\in V_{\F,0}^k$). Recalling the definition~\eqref{eq:def_normh} of the $\|\cdot\|_{\hat V_h^k}$-norm, that of $\tau_{\dK}$ given just below~\eqref{eq:lbs}, and using the triangle inequality, we infer that
\[
\|u_\K\|_{L^2(\Omega)} \le C \|(u_\K,u_\F)\|_{\hat V_h^k} = C\|\hat T_h(\phi)\|_{\hat V_h^k}.
\]
Combining this bound with~\eqref{eq:coerc_CS}, we obtain the assertion.
\end{proof}

An important tool in the analysis of HHO methods is the global reduction operator $\hat I_h^k : H^1_0(\Omega) \rightarrow \hat V_{h,0}^k$ defined such that, for all $v\in H^1_0(\Omega)$,
\begin{equation} \label{eq:def_hatIh}
\hat I_h^k(v) = (\Pi_\K^k(v),\Pi_\F^k(v)) := ((\Pi_K^k(v))_{K\in\K},(\Pi_F^k(v))_{F\in\F}) \in \hat V_{h,0}^k,
\end{equation}
where $\Pi_K^k$ and $\Pi_F^k$ denote the $L^2$-orthogonal projectors onto $\mathbb{P}_d^k(K)$ and $\mathbb{P}_{d-1}^k(F)$, respectively. We also define the local reduction operator $\hat I_K^k:  \ H^1(K) \rightarrow \hat V_K^k$ such that, for all $v \in H^1(K)$, 
\begin{equation}
\hat I_K^k(v)=(\Pi_K^k(v),\Pi_{\dK}^k(v)) = (\Pi_K^k(v),(\Pi_F^k(v))_{F\in\FK}) \in \hat V_K^k.
\end{equation}
Recalling the local reconstruction operator $p_K^{k+1}: \hat V_K^k \to \mathbb{P}_d^{k+1}(K)$ defined in~\eqref{eq:localnp}, \cite[Lemma~3]{DiPEL2014arbitrary} shows that
\begin{equation} \label{eq:elliptic_proj}
e_K^{k+1} := p_K^{k+1}\circ \hat I_K^k : H^1(K) \rightarrow \mathbb{P}_d^{k+1}(K),
\end{equation}
is the elliptic projector, i.e., for all $v\in H^1(K)$, $e_K^{k+1}(v)$ is the unique polynomial in $\mathbb{P}_d^{k+1}(K)$ such that $(\nabla (e_K^{k+1}(v)-v),\nabla w)_{L^2(K)} = 0$ for all $w\in \mathbb{P}_d^{k+1}(K)$ and $(e_K^{k+1}(v)-v,1)_{L^2(K)}=0$. For two functions $v,w\in H^1(K)$, the above orthogonality condition on the gradient implies that
\begin{equation} \label{eq:Pytha_ell}
(\nabla(e_K^{k+1}(v)-v), \nabla( e_K^{k+1}(w)-w))_{L^2(K)} = (\nabla v,\nabla w)_{L^2(K)} - (\nabla e_K^{k+1}(v),\nabla e_K^{k+1}(w))_{L^2(K)}.
\end{equation}

\begin{lemma}[Discrete error estimate] \label{lem:err_est}
There is $C$ such that
\begin{equation}
\|\hat T_h(\phi)- \hat I_h^k(T(\phi))\|_{\hat V_h^k} \le Ch^t\|T(\phi)\|_{H^{1+t}(\Omega)},
\end{equation}
for all $t\in [s,k+1]$, and all $\phi\in L^2(\Omega)$ such that $T(\phi)\in H^{1+t}(\Omega)$; here, $s>\frac12$ is the smoothness index resulting from the elliptic regularity theory.
\end{lemma}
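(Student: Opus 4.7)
The plan is to introduce the discrete error $\hat e_h := \hat T_h(\phi) - \hat I_h^k(u) \in \hat V_{h,0}^k$, where $u := T(\phi) \in H^1_0(\Omega) \cap H^{1+t}(\Omega)$, and to estimate it by a Galerkin-type consistency argument. Combining the coercivity~\eqref{eq:hatah_coer} with the definition~\eqref{eq:def_hatTh} of $\hat T_h$ gives $\beta \|\hat e_h\|_{\hat V_h^k}^2 \le \hat a_h(\hat e_h, \hat e_h) = \delta_h(\hat e_h)$, where
\[
\delta_h(\hat w_h) := b(\phi, w_\K) - \hat a_h(\hat I_h^k(u), \hat w_h), \qquad \hat w_h = (w_\K, w_\F) \in \hat V_{h,0}^k.
\]
The task is thus to show $|\delta_h(\hat w_h)| \le C h^t \|u\|_{H^{1+t}(\Omega)} \|\hat w_h\|_{\hat V_h^k}$.

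I would then analyze $\delta_h$ cell-by-cell. By definition~\eqref{eq:lbs} of $\hat a_K$ and the identity $p_K^{k+1} \circ \hat I_K^k = e_K^{k+1}$ in~\eqref{eq:elliptic_proj}, the stiffness contribution splits into a gradient pairing of $\nabla e_K^{k+1}(u)$ with $\nabla p_K^{k+1}(\hat w_K)$ and a stabilization pairing. The orthogonality property~\eqref{eq:Pytha_ell} permits replacing $\nabla e_K^{k+1}(u)$ by $\nabla u$ in the first pairing. Since $t > \tfrac12$, $\nabla u \cdot \bm{n}_K$ is well-defined in $L^2(\partial K)$, so I may integrate by parts using $-\Delta u = \phi$ to obtain a volume term $(\phi, p_K^{k+1}(\hat w_K))_{L^2(K)}$ and a face term $(\nabla u \cdot \bm{n}_K, p_K^{k+1}(\hat w_K))_{L^2(\partial K)}$. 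The volume terms combine with $b(\phi, w_\K)$, while inserting the $L^2$-projections $\Pi_K^k$ and $\Pi_F^k$ and exploiting their orthogonalities to drop arbitrary polynomial components allows one to rewrite every factor involving $u$ in $\delta_h(\hat w_h)$ as an elliptic-projection error $u - e_K^{k+1}(u)$ (inside $K$ or on $\partial K$ modulo $\Pi_{\dK}^k$), paired against a quantity controlled by $\|\hat w_K\|_{\hat V_K^k}$.

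Applying Cauchy--Schwarz on each cell and summing yields
\[
|\delta_h(\hat w_h)| \le C \Bigl( \sum_{K \in \K} \eta_K^2 \Bigr)^{1/2} \|\hat w_h\|_{\hat V_h^k},
\]
with $\eta_K$ collecting $\|\nabla(u - e_K^{k+1}(u))\|_{L^2(K)}$, the scaled face residuals $h_K^{1/2}\|(I - \Pi_F^k)(\nabla u \cdot \bm{n}_K)\|_{L^2(\partial K)}$ and $h_K^{-1/2}\|(I - \Pi_{\dK}^k)(u - e_K^{k+1}(u))\|_{L^2(\partial K)}$, and the stabilization remainder $\|\tau_{\dK}^{1/2} S_{\dK}^k(\hat I_K^k u)\|_{L^2(\dK)}$. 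Standard approximation properties of the elliptic and $L^2$ projectors on the shape-regular polytopal mesh, stated for any real $t \in [s, k+1]$, together with the usual stabilization estimate, give $\eta_K \le C h_K^t |u|_{H^{1+t}(K)}$, which delivers the asserted bound.

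The main difficulty relative to~\cite{DiPEL2014arbitrary} lies in the fractional endpoint $t \in (\tfrac12, 1)$, which may occur since $s$ is only guaranteed to exceed $\tfrac12$. There I must invoke approximation estimates for $e_K^{k+1}$, $\Pi_K^k$, $\Pi_F^k$, and the stabilization operator on $H^{1+t}$-functions over polytopes; these follow from Bramble--Hilbert estimates on the simplicial submesh combined with interpolation in the Sobolev scale, and from a fractional trace inequality (meaningful precisely because $s > \tfrac12$) used to control $\nabla u \cdot \bm{n}_K$ on each face and to justify the integration by parts.
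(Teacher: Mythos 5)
Your proposal follows essentially the same route as the paper: reduce the error to the consistency functional $\delta_h$ via coercivity and Galerkin orthogonality, express it cell-by-cell in terms of elliptic-projection errors (using $p_K^{k+1}\circ \hat I_K^k = e_K^{k+1}$, integration by parts with $-\Delta u=\phi$, and the flux single-valuedness), bound the stabilization contribution by $\|\nabla(u-e_K^{k+1}(u))\|_{L^2(K)}$, and invoke fractional-order approximation and trace estimates valid because $t\ge s>\tfrac12$. The only cosmetic difference is that you write the face residual as $h_K^{1/2}\|(I-\Pi_F^k)(\nabla u\cdot\bm{n}_K)\|_{L^2(\partial K)}$ whereas the paper keeps $h_K^{1/2}\|\nabla \xi_K\|_{L^2(\partial K)}$ with $\xi_K=e_K^{k+1}(u)-u$; both are standard and yield the same rate.
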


\begin{proof}
Let $t\in [s,k+1]$, and let $\phi\in L^2(\Omega)$ be such that $T(\phi)\in H^{1+t}(\Omega)$. Proceeding as in the proof of \cite[Theorem~8]{DiPEL2014arbitrary}, we infer that
\[
\|\hat T_h(\phi)- \hat I_h^k(T(\phi))\|_{\hat V_h^k} \le C \sup_{\substack{\hat w_h\in \hat V_{h,0}^k \\\|\hat w_h\|_{\hat V_h^k}=1}} |\delta_h(\hat w_h)| =: C\, \|\delta_h\|_{(\hat V_{h,0}^k)'},
\]
with the consistency error $\delta_h(\hat w_h)$ such that
\begin{align*}
\delta_h(\hat w_h) = {}&\sum_{K\in\K} (\nabla \xi_K,\nabla w_K)_{L^2(K)} + (\nabla\xi_K{\cdot}\bm{n}_{K},w_{\dK}-w_K)_{L^2(\dK)} \\ &+ (\tau_{\dK}S_{\dK}^k(\hat I_K^k(u)),S_{\dK}^k(\hat w_K))_{L^2(\dK)},
\end{align*}
and the shorthand notation $\xi_K:=e_K^{k+1}(u_{|K})-u_{|K}$ and $u=T(\phi)$ (we used $s>\frac12$ in writing the second summand on the right-hand side above). Using the Cauchy--Schwarz inequality and recalling the definition of the norm $\|\hat w_h\|_{\hat V_h^k}$, we obtain 
\[
\|\delta_h\|_{(\hat V_{h,0}^k)'} \le C\, \bigg(\sum_{K\in\K} \|\nabla\xi_K\|_{L^2(K)}^2 + h_K\|\nabla\xi_K\|_{L^2(\dK)}^2 + h_K^{-1}\|S_{\dK}^k(\hat I_K^k(u))\|_{L^2(\dK)}^2\bigg)^{\frac12}.
\]
Recalling the definition~\eqref{eq:lbst} of the stabilization operator $S_{\dK}^k$, % and the property~\eqref{eq:elliptic_proj}, 
we obtain that
\begin{align*}
S_{\dK}^k(\hat I_K^k(u)) &= \Pi_{\dK}^k(\Pi_{\dK}^k(u)-e_K^{k+1}(u)_{|\partial K})-\Pi_K^k(\Pi_{\dK}^k(u)-e_K^{k+1}(u))_{|\partial K} \\
&= \Pi_{\dK}^k((u-e_K^{k+1}(u))_{|\partial K})-\Pi_K^k(u-e_K^{k+1}(u))_{|\partial K} \\
&= -\Pi_{\dK}^k((\xi_K)_{|\partial K}) + \Pi_K^k(\xi_K)_{|\partial K}.
\end{align*}
We then have
\begin{align*}
\|S_{\dK}^k(\hat I_K^k(u))\|_{L^2(\dK)} &\le \|\xi_K\|_{L^2(\dK)} + \|\Pi_K^k(\xi_K)\|_{L^2(\dK)} \\
&\le \|\xi_K\|_{L^2(\dK)} + Ch_K^{-\frac12}\|\xi_K\|_{L^2(K)} \\
&\le C(h_K^{-\frac12}\|\xi_K\|_{L^2(K)}+h_K^{\frac12}\|\nabla \xi_K\|_{L^2(K)}) \\
&\le Ch_K^{\frac12}\|\nabla \xi_K\|_{L^2(K)},
\end{align*}
where we used a triangle inequality and the $L^2$-stability of $\Pi_{\dK}^k$ in the first line, a discrete trace inequality and the $L^2$-stability of $\Pi_K^k$ in the second line, a multiplicative trace inequality in the third line, and the Poincar\'e--Steklov inequality on $K$ in the fourth line (that is, $\|\xi_K\|_{L^2(K)}\le Ch_K\|\nabla \xi_K\|_{L^2(K)}$ since $\xi_K$ has zero mean-value in $K$ by construction).
We conclude that
$h_K^{-\frac12}\|S_{\dK}^k(\hat I_K^k(u))\|_{L^2(\dK)}\le C\|\nabla\xi_K\|_{L^2(K)}$ for some generic constant $C$,
and therefore, we have
\[
\|\delta_h\|_{(\hat V_{h,0}^k)'} \le C\, \bigg(\sum_{K\in\K} \|\nabla\xi_K\|_{L^2(K)}^2 + h_K\|\nabla\xi_K\|_{L^2(\dK)}^2\bigg)^{\frac12}.
\]
Finally, invoking the approximation properties of the elliptic projector on all the mesh cells leads to the assertion.
\end{proof}

%---------------------------------------------------
\section{Error analysis for the eigenvalue problem} 
\label{sec:ea}

The goal of this section is to perform the error analysis of the discrete eigenvalue problem~\eqref{eq:vfh} by using the abstract theory outlined in Section~\ref{sec:theory} in the Hilbert space $L=L^2(\Omega)$. Let $T,T^* : L^2(\Omega) \rightarrow H^1_0(\Omega)\subset L^2(\Omega)$ be the exact solution and adjoint solution operators defined in Section~\ref{sec:statement} ($T=T^*$, i.e., $T$ is selfadjoint, in the present symmetric setting). Let $T_\K : L \rightarrow V_\K^k\subset L$ be the discrete HHO solution operator defined in~\eqref{eq:def_TK}. Its adjoint operator $T_\K^* : L \rightarrow V_\K\subset L$ is defined so that, for all $\psi \in L$, $T_\K^*(\psi)\in V_\K$ is the unique solution of
\begin{equation} \label{eq:def_TKadj}
a_\K(w_\K,T_\K^*(\psi)) = b(w_\K,\psi), \qquad \forall w_\K\in V_\K^k.
\end{equation} 
Owing to the symmetry of the bilinear forms $a_\K$ and $b$, we have $T_\K=T_\K^*$ in the present setting, i.e., $T_\K$ is selfadjoint. We keep as before a distinct notation to allow for more generality, and we also set $\hat T_h^\dagger : L \to \hat V_{h,0}^k$ so that $\hat T_h^\dagger(\psi)=(T_\K^*(\psi),(\ZFd\circ T_\K^*)(\psi))$ for all $\psi \in L$. Proceeding as in Lemma~\ref{lem:TK}, we conclude that 
\begin{equation} \label{eq:hatThdag}
\hat a_h(\hat w_h,\hat T_h^\dagger(\psi)) = b(w_\K,\psi), \qquad \forall \hat w_h=(w_{\K},w_{\F})\in \hat V_{h,0}^k. 
\end{equation}
In the present symmetric setting, we have $\hat T_h=\hat T_h^\dagger$
with $\hat T_h$ defined in~\eqref{eq:def_hatTh}.
Finally, the elliptic regularity theory
implies that there is a real number $s\in (\frac12,1]$ so that 
$T,T^* \in \mathcal{L}(L^2(\Omega);H^{1+s}(\Omega))$, with 
operator norm denoted by $C_s$.

\subsection{Preliminary results}
To verify that we can apply the abstract theory from Section~\ref{sec:theory}, let us show that $T_\K$ converges to $T$ in operator norm as the mesh-size $h$ tends to zero, i.e., that \eqref{eq:normc} holds true.

\begin{lemma}[Bound on $L\times L$] \label{lem:bnd0}
The following holds true: 
\begin{equation}
\sup_{(\phi,\psi)\in L\times L} |((T-T_\K)(\phi),\psi)_L| \le C h^{s} \|\phi \|_L  \| \psi \|_L.
\end{equation}
where $s\in (\frac12,1]$ is the smoothness index associated with the elliptic regularity theory.
Consequently, we have $\|T-T_\K\|_{\mathcal{L}(L;L)}\to 0$ as $h\to 0$.
\end{lemma}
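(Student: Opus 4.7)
The plan is to combine a duality identity with the discrete estimate of Lemma~\ref{lem:err_est}. Set $u=T(\phi)\in H^{1+s}(\Omega)\cap H^1_0(\Omega)$ and $u^*=T^*(\psi)\in H^{1+s}(\Omega)\cap H^1_0(\Omega)$; elliptic regularity gives $\|u\|_{H^{1+s}(\Omega)}\le C_s\|\phi\|_L$ and $\|u^*\|_{H^{1+s}(\Omega)}\le C_s\|\psi\|_L$. The starting point is the identity
\begin{equation*}
((T-T_\K)(\phi),\psi)_L \;=\; a(u,u^*)-\hat a_h(\hat T_h(\phi),\hat T_h^\dagger(\psi)),
\end{equation*}
which is obtained by testing~\eqref{eq:sourceTadj_weak} with $w=u$ (so that $(T(\phi),\psi)_L=b(u,\psi)=a(u,u^*)$) and by combining the HHO defining identities~\eqref{eq:def_hatTh} and~\eqref{eq:hatThdag} (so that $(T_\K(\phi),\psi)_L=b(T_\K(\phi),\psi)=\hat a_h(\hat T_h(\phi),\hat T_h^\dagger(\psi))$).

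I would then insert the HHO reduction operator $\hat I_h^k$ applied to $u$ and $u^*$, splitting the error as $E_1+E_2$ with
\begin{align*}
E_1 &:= a(u,u^*)-\hat a_h(\hat I_h^k(u),\hat I_h^k(u^*)),\\
E_2 &:= \hat a_h(\hat I_h^k(u)-\hat T_h(\phi),\hat I_h^k(u^*))+\hat a_h(\hat T_h(\phi),\hat I_h^k(u^*)-\hat T_h^\dagger(\psi)).
\end{align*}
For $E_1$, using the definition~\eqref{eq:lbs} of $\hat a_K$, the fact that $p_K^{k+1}\circ\hat I_K^k=e_K^{k+1}$, and the elliptic-projector identity~\eqref{eq:Pytha_ell}, the local cell contribution equals $(\nabla(u-e_K^{k+1}u),\nabla(u^*-e_K^{k+1}u^*))_{L^2(K)}-(\tau_{\dK}S_{\dK}^k(\hat I_K^k u),S_{\dK}^k(\hat I_K^k u^*))_{L^2(\dK)}$. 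The gradient term is handled by the approximation properties of the elliptic projector on $H^{1+s}$ functions; the stabilization term is handled via the bound $h_K^{-1/2}\|S_{\dK}^k(\hat I_K^k v)\|_{L^2(\dK)}\le C\|\nabla(v-e_K^{k+1}v)\|_{L^2(K)}$ already derived inside the proof of Lemma~\ref{lem:err_est}. Summing over the mesh and applying Cauchy--Schwarz yields $|E_1|\le Ch^{2s}\|u\|_{H^{1+s}(\Omega)}\|u^*\|_{H^{1+s}(\Omega)}\le Ch^{2s}\|\phi\|_L\|\psi\|_L$.

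For $E_2$, I would invoke the boundedness~\eqref{eq:hatah_bnd} of $\hat a_h$, Lemma~\ref{lem:err_est} applied with $t=s$ (giving $\|\hat I_h^k u-\hat T_h(\phi)\|_{\hat V_h^k}\le C_s h^s\|\phi\|_L$ and the analogous estimate for the adjoint pair, which holds by the same argument since $\hat T_h^\dagger$ satisfies~\eqref{eq:hatThdag}), Lemma~\ref{lem:stab_hatTh} giving $\|\hat T_h(\phi)\|_{\hat V_h^k}\le C\|\phi\|_L$, and the uniform bound $\|\hat I_h^k(u^*)\|_{\hat V_h^k}\le C\|u^*\|_{H^1(\Omega)}\le C\|\psi\|_L$ (the last step using the $H^1(\Omega)$-bound obtained by taking $w=u^*$ in~\eqref{eq:sourceTadj_weak}). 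This gives $|E_2|\le Ch^s\|\phi\|_L\|\psi\|_L$, which absorbs $|E_1|$. The first claim follows; the operator-norm convergence is then immediate since $L=L^2(\Omega)$ is a Hilbert space, so that $\|T-T_\K\|_{\mathcal{L}(L;L)}$ equals the supremum on unit balls of the bilinear pairing bounded above.

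The main obstacle I anticipate is the stability bound $\|\hat I_h^k(u^*)\|_{\hat V_h^k}\le C\|u^*\|_{H^1(\Omega)}$: the cell part is controlled by $H^1$-stability of $\Pi_K^k$, but the face part $h_F^{-1/2}\|\Pi_K^k(u^*)-\Pi_F^k(u^*)\|_{L^2(F)}$ needs to be handled through a multiplicative trace inequality combined with approximation on $H^1(K)$. A secondary subtlety is that the consistency estimate for $E_1$ has to be carried out at the minimal smoothness index $s\in(\tfrac12,1]$, but this is precisely the regime controlled by the computations inside Lemma~\ref{lem:err_est} and no new argument is required.
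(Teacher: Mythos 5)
Your proof is correct, but it takes a genuinely different decomposition from the paper's, and the comparison is instructive. The paper starts from the same duality identity $((T-T_\K)(\phi),\psi)_L = (T(\phi),\psi)_L - \hat a_h(\hat T_h(\phi),\hat T_h^\dagger(\psi))$, but inserts the reduction operator only in the \emph{first} argument and then invokes the discrete adjoint identity \eqref{eq:hatThdag} with $\hat w_h=\hat I_h^k(T(\phi))$, which collapses the consistency part into the pure $L^2$-projection error $S_1=(T(\phi)-\Pi_\K^k(T(\phi)),\psi)_L$; this is bounded by $Ch\|\phi\|_L\|\psi\|_L$ using only the $H^1$-regularity of $T(\phi)$ and requires no stability of $\hat I_h^k$ in the discrete energy norm. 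You instead insert $\hat I_h^k$ in \emph{both} arguments, so your $E_1$ is a discrete-energy consistency term that you treat with the elliptic-projector identity \eqref{eq:Pytha_ell} together with the stabilization bound from inside Lemma~\ref{lem:err_est} --- this is precisely the mechanism the paper reserves for the sharper bound on $G_\mu\times G_\mu^*$ in Lemma~\ref{lem:bnd2} (the term $S_{2,2}$ there). The price of your symmetric splitting is the extra stability estimate $\|\hat I_h^k(u^*)\|_{\hat V_h^k}\le C\,|u^*|_{H^1(\Omega)}$, which you correctly flag as the delicate point; it does hold (the face contribution $h_F^{-1/2}\|\Pi_F^k(u^*)-\Pi_K^k(u^*)\|_{L^2(F)}$ is controlled by $|u^*|_{H^1(K)}$ via the $L^2(F)$-stability of $\Pi_F^k$, the multiplicative trace inequality, and the approximation and $H^1$-stability properties of $\Pi_K^k$ on shape-regular meshes), but it is an ingredient the paper's route avoids entirely, since there the second slot is bounded by $\|\hat T_h^\dagger(\psi)\|_{\hat V_h^k}\le C\|\psi\|_L$ from Lemma~\ref{lem:stab_hatTh}. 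Both routes land on $Ch^s\|\phi\|_L\|\psi\|_L$ (your $E_1=O(h^{2s})$ is absorbed by $E_2=O(h^s)$ for $h\le 1$), so nothing is lost; your version simply performs at the $L\times L$ level the symmetrized work that only becomes necessary at the $G_\mu\times G_\mu^*$ level.
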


\begin{proof}
For all $\phi,\psi \in L$, we have
\begin{align}
&((T-T_\K)(\phi),\psi)_{L}\nonumber \\ &= (T(\phi),\psi)_{L} - b(T_\K(\phi),\psi) \nonumber\\
&= (T(\phi),\psi)_{L} - a_\K(T_\K(\phi),T_\K^*(\psi)) \nonumber\\
&= (T(\phi),\psi)_{L} - \hat a_h(\hat T_h(\phi),\hat T_h^\dagger(\psi)) \nonumber\\
&= (T(\phi),\psi)_{L} - \hat a_h(\hat I_h^k(T(\phi)),\hat T_h^\dagger(\psi)) + \hat a_h(\hat I_h^k(T(\phi))-\hat T_h(\phi),\hat T_h^\dagger(\psi)) \nonumber\\
&= (T(\phi)-\Pi_\K^k(T(\phi)),\psi)_{L} + \hat a_h(\hat I_h^k(T(\phi))-\hat T_h(\phi),\hat T_h^\dagger(\psi)), \label{eq:calcul1}
\end{align}
where we used the definition of the bilinear form $b$ in the first line, the definition~\eqref{eq:def_TKadj} of $T_\K^*$ in the second line, the definition~\eqref{eq:def_aK} of $a_\K$ and Lemma~\ref{lem:TK} in the third line, a simple algebraic manipulation in the fourth line, and the property~\eqref{eq:hatThdag} and the definition~\eqref{eq:def_hatIh} of $\hat I_h^k$ in the fifth line. 
Let us call $S_1,S_2$ the two summands on the right-hand side of~\eqref{eq:calcul1}. Owing to the elliptic regularity theory and the approximation properties of the projector $\Pi_\K^k$ (with $k\ge0$), we obtain that
\[
|S_1| \le Ch|T(\phi)|_{H^1(\Omega)}\|\psi\|_L.
\]
Since $|T(\phi)|_{H^1(\Omega)} \le \|T(\phi)\|_{H^{1+s}(\Omega)} \le C_s\|\phi\|_L$, we infer that
\[
|S_1| \le CC_sh \|\phi\|_L\|\psi\|_L.
\]
To bound $S_2$, we use the boundedness property~\eqref{eq:hatah_bnd} of $\hat a_h$ followed by the error estimate from Lemma~\ref{lem:err_est} (with $t=s$) and the stability property of $\hat T_h^\dagger=\hat T_h$ from Lemma~\ref{lem:stab_hatTh} to infer that
\[
|S_2| \le Ch^s\|T(\phi)\|_{H^{1+s}(\Omega)}\|\psi\|_L \le CC_sh^s\|\phi\|_L\|\psi\|_L.
\]  
Combining the bounds on $S_1$ and $S_2$  %and observing that $s\le 1$ 
concludes the proof.
\end{proof}

Let $\mu \in \sigma(T)\setminus\{0\}$ with ascent $\alpha$ 
and algebraic multiplicity $m$.
To quantify the smoothness of the functions in the subspaces $G_\mu$ and 
$G_\mu^*$ defined in~\eqref{eq:def_Gmu}, we assume that there is a real number
$t\in [s,k+1]$ and a constant $C_t$ so that 
\begin{equation}
\label{eq:smoothness_t}
\begin{alignedat}{2}
\|\phi\|_{H^{1+t}(\Omega)} + \|T(\phi)\|_{H^{1+t}(\Omega)} & \le C_t\|\phi\|_L, &\qquad& \forall \phi \in G_\mu, \\
\|\psi\|_{H^{1+t}(\Omega)} + \|T^*(\psi)\|_{H^{1+t}(\Omega)} & \le C_t\|\psi\|_L, &\qquad&  \forall \psi\in G_\mu^*.
\end{alignedat}
\end{equation} 
Note that $t$ depends on $\mu$, but we just write $t$ instead of $t_\mu$ to alleviate the notation. If $t=s$, functions in $G_\mu$ and $G_\mu^*$ do not provide additional smoothness with respect to that resulting from the elliptic regularity theory. In general, functions in $G_\mu$ and $G_\mu^*$ are smoother, and one has $t>s$. The case $t=k+1$ leads to optimal error estimates, see Remark~\ref{rem:optimal} below.

\begin{lemma}[Bound on $G_\mu\times L$ and $L\times G_\mu^*$] \label{lem:bnd1}
The following holds true:
\begin{equation}
\sup_{(\phi,\psi)\in G_\mu\times L} |((T-T_\K)(\phi),\psi)_L| \le C h^{t} \|\phi \|_L  \| \psi \|_L,
\end{equation}
where $t\in [s,k+1]$ is the smoothness index associated with $\mu$.
Consequently, we have
\begin{equation}
\|(T-T_\K)|_{G_\mu} \|_{\mathcal{L}(G_\mu;L)} \le Ch^{t}.
\end{equation}
Similar bounds hold for $T_\K^*$, and in particular, we have
$\|(T-T_\K)^*|_{G_\mu^*} \|_{\mathcal{L}(G_\mu^*;L)} \le Ch^{t}$.
\end{lemma}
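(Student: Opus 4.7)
The plan is to mimic the argument of Lemma~\ref{lem:bnd0} but exploit the enhanced smoothness encoded by~\eqref{eq:smoothness_t} to upgrade the rate from $h^s$ to $h^t$. Specifically, for $\phi\in G_\mu$ and $\psi\in L$, I would reproduce the chain of identities~\eqref{eq:calcul1} verbatim, which reads
\[
((T-T_\K)(\phi),\psi)_L = \underbrace{(T(\phi)-\Pi_\K^k(T(\phi)),\psi)_L}_{S_1} + \underbrace{\hat a_h(\hat I_h^k(T(\phi))-\hat T_h(\phi),\hat T_h^\dagger(\psi))}_{S_2},
\]
since this decomposition only uses the defining properties of $T_\K$, $T_\K^*$, $\hat T_h$, $\hat T_h^\dagger$ and $\hat I_h^k$, all of which are already in place.

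The bound on $S_2$ is the direct beneficiary of the improved regularity: invoking the boundedness~\eqref{eq:hatah_bnd} of $\hat a_h$, then Lemma~\ref{lem:err_est} with the parameter $t$ (legitimate because $t\in[s,k+1]$ and $T(\phi)\in H^{1+t}(\Omega)$ by~\eqref{eq:smoothness_t}), followed by Lemma~\ref{lem:stab_hatTh} applied to $\hat T_h^\dagger$, I obtain
\[
|S_2|\le \beta^{-1}\|\hat I_h^k(T(\phi))-\hat T_h(\phi)\|_{\hat V_h^k}\|\hat T_h^\dagger(\psi)\|_{\hat V_h^k}\le Ch^t\|T(\phi)\|_{H^{1+t}(\Omega)}\|\psi\|_L\le CC_t h^t\|\phi\|_L\|\psi\|_L.
\]

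For $S_1$, the standard $L^2$-approximation property of the piecewise $L^2$-projection $\Pi_\K^k$ applied to $T(\phi)\in H^{1+t}(\Omega)$ yields $\|T(\phi)-\Pi_\K^k(T(\phi))\|_L\le Ch^{\min(1+t,k+1)}\|T(\phi)\|_{H^{\min(1+t,k+1)}(\Omega)}$. Since $t\le k+1$, this exponent is at least $t$, so under the standing assumption $h\le 1$ (or up to a factor $h^{\min(1,k+1-t)}$) we have $|S_1|\le CC_t h^t\|\phi\|_L\|\psi\|_L$ via~\eqref{eq:smoothness_t}. Summing the bounds on $S_1$ and $S_2$ gives the stated estimate. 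Taking the supremum over $\psi\in L$ with $\|\psi\|_L=1$ and using that $L^2(\Omega)$ is its own dual furnishes $\|(T-T_\K)|_{G_\mu}\|_{\mathcal{L}(G_\mu;L)}\le Ch^t$.

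For the adjoint statement, I would run the symmetric version of the calculation: start from $((T-T_\K)^*(\psi),\phi)_L=((\phi,(T^*-T_\K^*)(\psi))_L$ for $\psi\in G_\mu^*$ and $\phi\in L$, split analogously into a projection term involving $T^*(\psi)$ and a consistency term using $\hat T_h$ tested against $\hat I_h^k(T^*(\psi))$, and invoke~\eqref{eq:smoothness_t} on $G_\mu^*$ together with the same ingredients (Lemmas~\ref{lem:err_est} and~\ref{lem:stab_hatTh}). No genuine difficulty is expected; the only mild bookkeeping point is ensuring that the $S_1$ estimate survives the case $t=k+1$, where the projection rate saturates at $h^{k+1}$ but this still matches $h^t$, and tracking that the constants depend only on the smoothness constant $C_t$ from~\eqref{eq:smoothness_t}, the mesh regularity, $k$, and $\Omega$.
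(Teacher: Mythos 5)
Your proposal is correct and follows essentially the same route as the paper: it starts from the decomposition~\eqref{eq:calcul1}, bounds $S_1$ via the $L^2$-approximation of $\Pi_\K^k$ at rate $h^{\min(t+1,k+1)}\ge h^{t}$ using~\eqref{eq:smoothness_t}, and bounds $S_2$ via the boundedness~\eqref{eq:hatah_bnd}, Lemma~\ref{lem:err_est} with index $t$, and Lemma~\ref{lem:stab_hatTh}. The treatment of the adjoint case by symmetry also matches the paper, which simply states that the other proof is similar.
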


\begin{proof}
We only prove prove the statement for $T_\K$, the other proof is similar. Our starting point is~\eqref{eq:calcul1}. Owing to the smoothness of the function $T(\phi)$ resulting from~\eqref{eq:smoothness_t}, we infer that
\[
|S_1| \le Ch^{\min(k+1,t+1)}\|T(\phi)\|_{H^{1+t}(\Omega)}\|\psi\|_L
\le CC_th^{\min(k+1,t+1)}\|\phi\|_L\|\psi\|_L.
\]
Using similar arguments leads to $|S_2| \le Ch^t\|\phi\|_L\|\psi\|_L$ and since $t\le \min(k+1,t+1)$, the assertion follows.
\end{proof}

\begin{lemma}[Bound on $G_\mu\times G_\mu^*$] \label{lem:bnd2}
The following holds true:
\begin{equation}
\sup_{(\phi,\psi)\in G_\mu\times G_\mu^*} |((T-T_\K)(\phi),\psi)_L| \le C h^{2t} \|\phi \|_L  \| \psi \|_L,
\end{equation}
where $t\in [s,k+1]$ is the smoothness index associated with $\mu$.
\end{lemma}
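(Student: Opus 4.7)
The plan is an Aubin--Nitsche-type duality argument that exploits the smoothness provided by $\phi\in G_\mu$ \emph{and} $\psi\in G_\mu^*$ simultaneously, gaining a factor $h^t$ from each side. Let $u:=T(\phi)$ and $u^*:=T^*(\psi)$, which by~\eqref{eq:smoothness_t} both lie in $H^{1+t}(\Omega)\cap H^1_0(\Omega)$ with norms bounded by $C_t\|\phi\|_L$ and $C_t\|\psi\|_L$, respectively.

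First, I would use the orthogonality of $\Pi_\K^k$ together with $T_\K(\phi)\in V_\K^k$ to split
\begin{equation*}
((T-T_\K)(\phi),\psi)_L = (u-\Pi_\K^k u,\,\psi-\Pi_\K^k\psi)_L + (\Pi_\K^k u - T_\K(\phi),\,\psi)_L.
\end{equation*}
The first summand is bounded by Cauchy--Schwarz and the approximation properties of $\Pi_\K^k$ applied on both factors, yielding $C h^{2\min(k+1,1+t)}\|\phi\|_L\|\psi\|_L\le Ch^{2t}\|\phi\|_L\|\psi\|_L$ since $t\le k+1$. The second summand equals $\hat a_h(\hat e,\hat T_h^\dagger(\psi))$ with $\hat e:=\hat I_h^k(u)-\hat T_h(\phi)$, by the adjoint discrete identity~\eqref{eq:hatThdag}. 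Splitting $\hat T_h^\dagger(\psi)=\hat I_h^k(u^*)+(\hat T_h^\dagger(\psi)-\hat I_h^k(u^*))$, the continuity~\eqref{eq:hatah_bnd} of $\hat a_h$ combined with Lemma~\ref{lem:err_est} applied to \emph{both} the primal error $\hat e$ and the dual error $\hat T_h^\dagger(\psi)-\hat I_h^k(u^*)$, together with~\eqref{eq:smoothness_t}, bounds $\hat a_h(\hat e,\hat T_h^\dagger(\psi)-\hat I_h^k(u^*))$ by $Ch^{2t}\|\phi\|_L\|\psi\|_L$.

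The main obstacle is the remaining piece $\hat a_h(\hat e,\hat I_h^k(u^*))=\hat a_h(\hat I_h^k u,\hat I_h^k u^*)-\hat a_h(\hat T_h(\phi),\hat I_h^k u^*)$. Using~\eqref{eq:def_hatTh} gives $\hat a_h(\hat T_h(\phi),\hat I_h^k u^*)=(\phi,\Pi_\K^k u^*)_L$, and the continuous identity $a(u,u^*)=(\phi,u^*)_L$ combined with orthogonality of $\Pi_\K^k$ rewrites this as $a(u,u^*)-(\phi-\Pi_\K^k\phi,\,u^*-\Pi_\K^k u^*)_L$, the latter remainder being of order $h^{2(1+t)}$. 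What is left is the consistency gap
\begin{equation*}
\hat a_h(\hat I_h^k u,\hat I_h^k u^*)-a(u,u^*) = \sum_{K\in\K}\Bigl[-(\nabla\xi_K^u,\nabla\xi_K^{u^*})_{L^2(K)} + (\tau_{\dK}S_{\dK}^k(\hat I_K^k u),S_{\dK}^k(\hat I_K^k u^*))_{L^2(\dK)}\Bigr],
\end{equation*}
with $\xi_K^v:=e_K^{k+1}(v)-v$, obtained from the Pythagoras identity~\eqref{eq:Pytha_ell}. Cell-wise Cauchy--Schwarz together with the approximation properties of the elliptic projector and the stabilization bound $h_K^{-1/2}\|S_{\dK}^k(\hat I_K^k v)\|_{L^2(\dK)}\le C\|\nabla\xi_K^v\|_{L^2(K)}$ already established in the proof of Lemma~\ref{lem:err_est} then control this gap by $Ch^{2t}\|u\|_{H^{1+t}}\|u^*\|_{H^{1+t}}\le Ch^{2t}\|\phi\|_L\|\psi\|_L$. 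The crux is that the Pythagoras identity converts the consistency gap into a \emph{product} of two elliptic-projection errors, so the smoothness of $u$ and $u^*$ contribute multiplicatively at order $h^t$ each; without this structural identity the natural estimate would only yield $h^t$.
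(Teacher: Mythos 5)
Your proof is correct and follows essentially the same route as the paper's: the same initial splitting into the double-projection term $(T(\phi)-\Pi_\K^k(T(\phi)),\psi-\Pi_\K^k(\psi))_L$ plus $\hat a_h(\hat I_h^k(T(\phi))-\hat T_h(\phi),\hat T_h^\dagger(\psi))$, the same further decomposition of the latter into the paper's $S_{2,1}$, $S_{2,2}$, $S_{2,3}$, and the same use of the Pythagoras identity~\eqref{eq:Pytha_ell} together with the stabilization estimate from Lemma~\ref{lem:err_est} to turn the consistency gap into a product of two elliptic-projection errors. The only nitpick is that the remainder $(\phi-\Pi_\K^k(\phi),T^*(\psi)-\Pi_\K^k(T^*(\psi)))_L$ is of order $h^{2\min(k+1,1+t)}$ rather than $h^{2(1+t)}$ when $t>k$, which still suffices since $t\le k+1$.
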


\begin{proof}
Our starting point is again~\eqref{eq:calcul1}, but we can now derive sharper bounds on the two summands $S_1$ and $S_2$ by exploiting the smoothness of both $\phi$ and $\psi$. On the one hand, we have
\[
S_1 = (T(\phi)-\Pi_\K^k(T(\phi)),\psi)_{L} = (T(\phi)-\Pi_\K^k(T(\phi)),\psi-\Pi_\K^k(\psi))_{L},
\]
so that 
\[
|S_1| \le Ch^{2\min(k+1,t+1)}\|T(\phi)\|_{H^{1+t}(\Omega)}\|\psi\|_{H^{1+t}(\Omega)}
\le CC_t^2h^{2\min(k+1,t+1)}\|\phi\|_L\|\psi\|_L,
\]
where we used the smoothness of the functions $T(\phi)$ and $\psi$ resulting from~\eqref{eq:smoothness_t}. On the other hand, we have
\begin{align*}
S_2 ={}& \hat a_h(\hat I_h^k(T(\phi))-\hat T_h(\phi),\hat T_h^\dagger(\psi)) \\
={}& \hat a_h(\hat I_h^k(T(\phi))-\hat T_h(\phi),\hat I_h^k(T^*(\psi))) + \hat a_h(\hat I_h^k(T(\phi))-\hat T_h(\phi),\hat T_h^\dagger(\psi)-\hat I_h^k(T^*(\psi))) \\
={}&  a(T(\phi),T^*(\psi)) - \hat a_h(\hat T_h(\phi),\hat I_h^k(T^*(\psi))) \\
&+ \hat a_h(\hat I_h^k(T(\phi)),\hat I_h^k(T^*(\psi)))-a(T(\phi),T^*(\psi)) \\
&+ \hat a_h(\hat I_h^k(T(\phi))-\hat T_h(\phi),\hat T_h^\dagger(\psi)-\hat I_h^k(T^*(\psi))) \\
={}& (\phi-\Pi_\K^k(\phi),T^*(\psi)-\Pi_\K^k(T^*(\psi)))_L \\
&+ \hat a_h(\hat I_h^k(T(\phi)),\hat I_h^k(T^*(\psi)))-a(T(\phi),T^*(\psi)) \\
&+ \hat a_h(\hat I_h^k(T(\phi))-\hat T_h(\phi),\hat T_h^\dagger(\psi)-\hat I_h^k(T^*(\psi)))
\end{align*}
where we used simple algebraic manipulations to derive the second and third identities, and the definition of $T$ together with that of $\hat T_h$ and of $\hat I_h^k$ to derive the last identity. Let us call $S_{2,1}$, $S_{2,2}$, $S_{2,3}$ the three summands on the right-hand side of the above equation. Reasoning as above and invoking the smoothness of the functions $\phi$ and $T^*(\psi)$ resulting from~\eqref{eq:smoothness_t}, we infer that
\[
|S_{2,1}| \le CC_t^2h^{2\min(k+1,t+1)}\|\phi\|_L\|\psi\|_L.
\]  
To bound $S_{2,2}$, we observe that
\begin{align*}
S_{2,2} = {}&\sum_{K\in\K} (\nabla e_K^{k+1}(T(\phi)),\nabla e_K^{k+1}(T^*(\psi)))_{L^2(K)} - (\nabla T(\phi),\nabla T^*(\psi))_{L^2(K)} \\ &+ \sum_{K\in\K} (\tau_{\dK}S_{\dK}^k(\hat I_h^k(T(\phi))),S_{\dK}^k(\hat I_h^k(T^*(\phi))))_{L^2(\dK)} =: S_{2,2,1} + S_{2,2,2}.
\end{align*}
Since $e_K^{k+1}$ is the elliptic projector, the identity~\eqref{eq:Pytha_ell} implies that
\[
S_{2,2,1} = \sum_{K\in\K} -(\nabla (T(\phi)-e_K^{k+1}(T(\phi))),\nabla (T^*(\psi)-e_K^{k+1}(T^*(\psi))))_{L^2(K)},
\]
%Using Lemma~\ref{lem:err_est} and the definition of the $\|\cdot\|_{\hat V_h^k}$-norm, 
Using the Cauchy--Schwarz inequality and the approximation properties of the elliptic projector, we infer that
\[
|S_{2,2,1}| \le Ch^{2t}\|T(\phi)\|_{H^{1+t}(\Omega)} \|T^*(\psi)\|_{H^{1+t}(\Omega)} \le CC_t^2h^{2t} \|\phi\|_L\|\psi\|_L.
\]
Moreover, reasoning as in the end of the proof of Lemma~\ref{lem:err_est}, we obtain that
\[
|S_{2,2,2}| \le Ch^{2t} \|T(\phi)\|_{H^{1+t}(\Omega)} \|T^*(\psi)\|_{H^{1+t}(\Omega)} \le CC_t^2h^{2t} \|\phi\|_L\|\psi\|_L.
\]
Hence, we have
\[
|S_{2,2}| \le CC_t^2h^{2t} \|\phi\|_L\|\psi\|_L.
\]
Finally, the bound on $S_{2,3}$ results from the boundedness property~\eqref{eq:hatah_bnd} of $\hat a_h$ and the error estimate from Lemma~\ref{lem:err_est} since
\[
|S_{2,3}| \le Ch^{2t}\|T(\phi)\|_{H^{1+t}(\Omega)} \|T^*(\psi)\|_{H^{1+t}(\Omega)} \le CC_t^2h^{2t}\|\phi\|_L\|\psi\|_L.
\]
Collecting the above estimates concludes the proof.
\end{proof}

\subsection{Main results}
We can now present our main results. 
Let $\mu \in \sigma(T)\setminus\{0\}$ with ascent $\alpha$ 
and algebraic multiplicity $m$. We focus now on the spectral approximation of selfadjoint operators, so that we have $\alpha=1$.  Owing to the convergence result from Lemma~\ref{lem:bnd0}, there are $m$ eigenvalues of $T_\K$, denoted $\mu_{h,1},\ldots,\mu_{h,m}$, that converge to $\mu$ as $h\to 0$.
%Let $\langle \mu_h \rangle = \sum_{j=1}^m \mu_{h,j}$ denote the arithmetic mean of these eigenvalues.

\begin{theorem}[Error estimate on eigenvalues and eigenfunctions in $L$] \label{thm:errest}
Assume that there is $t\in [s,k+1]$ so that the smoothness property~\eqref{eq:smoothness_t} holds true, where $s>\frac12$ is the smoothness index resulting from the elliptic regularity theory. Then
there is $C$, depending on $\mu$ (and on the mesh regularity, the polynomial degree $k$ and the domain $\Omega$) but independent of the mesh-size $h$, such that 
\begin{equation} \label{eq:esterr_eve}
\max_{1 \le j \le m} |\mu - \mu_{h,j}| \le C h^{2t}.
\end{equation}
Furthermore, let $u_{\K,j}\in V_\K^k$ be a unit vector in $\text{\em ker}(\mu_{h,j} I- T_\K)$. Then,
%for any integer \red{$r$ with $\ell \le r \le \alpha$}, 
there is a unit vector $u_j \in \text{\em ker}(\mu I-T) \subset G_\mu$ such that
\begin{equation} \label{eq:esterr_efe}
\| u_{j} - u_{\K,j} \|_L \le C h^{t}.
\end{equation}
\end{theorem}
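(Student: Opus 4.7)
The plan is to deduce this theorem directly from the abstract spectral approximation results of Section~\ref{sec:theory}, applied with $L = L^2(\Omega)$, with $T$ the exact solution operator, and with $T_n = T_\K$ the cell-based HHO solution operator defined in~\eqref{eq:def_TK}. The preliminary Lemma~\ref{lem:bnd0} already ensures that $\|T-T_\K\|_{\mathcal{L}(L;L)} \to 0$ as $h\to 0$, so the abstract framework is applicable. A crucial simplification in the present setting is that $T$ is selfadjoint, hence the ascent of any nonzero eigenvalue is $\alpha = 1$; this is what allows the $L^2$ error on eigenfunctions to be $O(h^t)$ rather than a fractional power of it, and the eigenvalue error to be $O(h^{2t})$ rather than $O(h^{2t/\alpha})$.

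For the eigenvalue bound~\eqref{eq:esterr_eve}, I would apply Theorem~\ref{thm:eve0} to each $\mu_{h,j}$ converging to $\mu$. With $\alpha=1$, the left-hand side of~\eqref{eq:cv_eigenval} reduces to $\max_j |\mu-\mu_{h,j}| + |\mu - \langle \mu_h\rangle|$, which dominates the quantity we need to estimate. The right-hand side has two summands to control. The bilinear-form supremum on $G_\mu \times G_\mu^*$ is precisely what Lemma~\ref{lem:bnd2} bounds by $Ch^{2t}$. The operator-norm product $\|(T-T_\K)|_{G_\mu}\|_{\mathcal{L}(G_\mu;L)} \cdot \|(T-T_\K)^*|_{G_\mu^*}\|_{\mathcal{L}(G_\mu^*;L)}$ is the product of two factors, each bounded by $Ch^t$ thanks to Lemma~\ref{lem:bnd1} (together with its analogue for the adjoint, which the lemma also states). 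Multiplying gives $Ch^{2t}$ as well, and the eigenvalue estimate follows.

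For the eigenfunction bound~\eqref{eq:esterr_efe}, I would invoke Theorem~\ref{thm:efe0} with $\ell = r = \alpha = 1$. Since $u_{\K,j}$ is a unit vector in $\ker(\mu_{h,j} I - T_\K)$, the theorem produces a vector $u_j \in \ker(\mu I - T) \subset G_\mu$ such that
\begin{equation*}
\|u_j - u_{\K,j}\|_L \le C \,\|(T-T_\K)|_{G_\mu}\|_{\mathcal{L}(G_\mu;L)}^{(r-\ell+1)/\alpha} = C \,\|(T-T_\K)|_{G_\mu}\|_{\mathcal{L}(G_\mu;L)},
\end{equation*}
which Lemma~\ref{lem:bnd1} bounds by $Ch^t$. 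To match the statement that $u_j$ is a unit vector, I would note that since $\|u_{\K,j}\|_L = 1$ and $\|u_j - u_{\K,j}\|_L \to 0$, the vector $u_j$ (which a priori only lies in $\ker(\mu I - T)$) has norm tending to $1$, and we may rescale it to unit length; the rescaling perturbs the error by a factor $1 + O(h^t)$, which is absorbed in the constant.

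No step in this argument is technically hard, because the heavy lifting has already been done in Lemmas~\ref{lem:bnd0}--\ref{lem:bnd2}. The only points that warrant care are the verification that the selfadjointness of $T$ (inherited from the symmetry of $a$ and $b$) implies $\alpha=1$, so that the abstract bounds specialize to the claimed orders, and the normalization step for $u_j$ mentioned above. Apart from these, the theorem is essentially a repackaging of the three preliminary lemmas through the abstract theory.
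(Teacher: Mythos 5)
Your proposal is correct and follows essentially the same route as the paper, whose proof is the one-line combination of Lemma~\ref{lem:bnd1} and Lemma~\ref{lem:bnd2} with Theorem~\ref{thm:eve0} and Theorem~\ref{thm:efe0}; you have merely spelled out the specialization to $\alpha=1$ (which the paper notes just before the theorem) and added a harmless normalization remark for $u_j$. Nothing further is needed.
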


\begin{proof}
Combining the results from Lemma~\ref{lem:bnd1}, and Lemma~\ref{lem:bnd2} with Theorem~\ref{thm:eve0} and Theorem~\ref{thm:efe0} completes the proof. 
\end{proof}

\begin{remark}[Error estimate on eigenvalues]
Since the eigenvalues $\lambda$ and $\lambda_h$ associated with~\eqref{eq:eigen_weak} and~\eqref{eq:vfh}, respectively, are such that $\lambda=\mu^{-1}$ and $\lambda_h=\mu_h^{-1}$, we infer that the same estimate as~\eqref{eq:esterr_eve} holds true for the error between $\lambda$ and $\lambda_h$.
\end{remark}

\begin{corollary}[Eigenfunction error estimate in $H^1$] \label{cor:H1}
Let us drop the index $j$ for simplicity from the
eigenfunction $u_j$ and the approximate eigenfunction $u_{\K,j}$ and let us set $\hat u_h=(u_\K,\ZF(u_\K))$.
Then the following holds true:
\begin{equation} \label{eq:eigenfunction_hatah}
 \hat a_h(\hat u_h- \hat I_h^k(u),\hat u_h - \hat I_h^k(u))^{\frac12} \le Ch^{t}.
\end{equation}
Consequently, we have
\begin{equation} \label{eq:eigenfunction_H1}
\bigg( \sum_{K\in\K} \|\nabla(u-p_K^{k+1}(\hat u_K))\|_{L^2(K)}^2 \bigg)^{\frac12} \le Ch^{t}.
\end{equation}
\end{corollary}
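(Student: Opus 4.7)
The plan is to prove the energy-norm estimate~\eqref{eq:eigenfunction_hatah} first and then deduce~\eqref{eq:eigenfunction_H1} from it by a triangle inequality on each mesh cell. The key trick for~\eqref{eq:eigenfunction_hatah} is to compare $\hat u_h$ not directly with $\hat I_h^k(u)$ but with the cell-face HHO solution operator applied to the right-hand side of the exact eigenvalue equation, since the latter can be analysed by Lemma~\ref{lem:err_est}. More precisely, since $(\mu,u)$ is an eigenpair of $T$ with $\mu = \lambda^{-1}$, we have $T(\lambda u) = u$. Writing $\hat u_h - \hat I_h^k(u) = (\hat u_h - \hat T_h(\lambda u)) + (\hat T_h(\lambda u) - \hat I_h^k(u))$, Lemma~\ref{lem:err_est} immediately gives
\[
\|\hat T_h(\lambda u) - \hat I_h^k(u)\|_{\hat V_h^k} \le Ch^t \|u\|_{H^{1+t}(\Omega)} \le C h^t,
\]
where the last inequality uses the smoothness assumption~\eqref{eq:smoothness_t} for the unit vector $u \in G_\mu$.

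The main step is to bound $\hat e_h := \hat u_h - \hat T_h(\lambda u)\in \hat V_{h,0}^k$. Subtracting the definition~\eqref{eq:def_hatTh} of $\hat T_h(\lambda u)$ from the discrete eigenvalue equation~\eqref{eq:vfh} satisfied by $\hat u_h$ yields
\[
\hat a_h(\hat e_h, \hat w_h) = b(\lambda_h u_\K - \lambda u, w_\K), \qquad \forall \hat w_h=(w_\K,w_\F) \in \hat V_{h,0}^k.
\]
Testing with $\hat w_h = \hat e_h$, applying the coercivity~\eqref{eq:hatah_coer}, Cauchy--Schwarz, and the discrete Poincar\'e inequality used in the proof of Lemma~\ref{lem:stab_hatTh} (which is valid for any element of $\hat V_{h,0}^k$), I obtain $\|\hat e_h\|_{\hat V_h^k} \le C \|\lambda_h u_\K - \lambda u\|_L$. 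The decomposition $\lambda_h u_\K - \lambda u = (\lambda_h - \lambda) u_\K + \lambda(u_\K - u)$, combined with $\|u_\K\|_L = 1$ and the estimates from Theorem~\ref{thm:errest} (namely $|\lambda_h - \lambda| \le Ch^{2t}$ and $\|u - u_\K\|_L \le Ch^t$), then gives $\|\hat e_h\|_{\hat V_h^k} \le Ch^t$. A triangle inequality and the upper bound in the norm equivalence~\eqref{eq:norm_equiv} summed over mesh cells now deliver~\eqref{eq:eigenfunction_hatah}.

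For~\eqref{eq:eigenfunction_H1}, I would use the identity $e_K^{k+1} = p_K^{k+1} \circ \hat I_K^k$ from~\eqref{eq:elliptic_proj} to write, cell by cell,
\[
\nabla(u - p_K^{k+1}(\hat u_K)) = \nabla(u - e_K^{k+1}(u)) + \nabla p_K^{k+1}(\hat I_K^k(u) - \hat u_K).
\]
Summing the squared $L^2(K)$-norms and splitting via a discrete Cauchy--Schwarz, the first contribution is controlled by the approximation properties of the elliptic projector, yielding $Ch^{2t}\|u\|_{H^{1+t}(\Omega)}^2 \le Ch^{2t}$. For the second contribution, the definition~\eqref{eq:lbs} of $\hat a_K$ gives the elementary bound $\|\nabla p_K^{k+1}(\hat v_K)\|_{L^2(K)}^2 \le \hat a_K(\hat v_K, \hat v_K)$; applied with $\hat v_K = (\hat I_h^k(u) - \hat u_h)_{|K}$ and summed over $K\in\K$, it reduces the task to~\eqref{eq:eigenfunction_hatah}, which has just been proved.

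The only delicate step is the middle one: recognising that the quantity $\|\lambda_h u_\K - \lambda u\|_L$ cleanly splits into the eigenvalue error and the $L^2$-eigenfunction error, both of which are provided in the right strength by Theorem~\ref{thm:errest}. This is what converts an $L^2$ eigenfunction rate of order $h^t$ into an energy-norm rate of the same order, whereas a naive argument comparing directly with $\hat I_h^k(u)$ would require a quasi-best-approximation inequality that is not available at this stage. All other manipulations are standard bookkeeping via the triangle inequality, the coercivity and boundedness of $\hat a_h$, and the approximation properties of the elliptic projector already used in the source-problem analysis.
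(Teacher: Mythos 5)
Your proof is correct, but it follows a genuinely different route from the paper. The paper establishes the algebraic identity $\hat a_h(\hat u_h,\hat I_h^k(u))=\lambda_h(u_\K,u)_L$ (using the discrete eigenvalue equation, the definition of $a_\K$ and the property of $\ZF$) and expands the quadratic form to obtain the exact relation $\hat a_h(\hat u_h-\hat I_h^k(u),\hat u_h-\hat I_h^k(u))=\lambda_h\|u_\K-u\|_L^2-\lambda_h+\lambda+\delta_u$ with $\delta_u:=\hat a_h(\hat I_h^k(u),\hat I_h^k(u))-a(u,u)$, a Pythagorean-type eigenvalue error identity; each term on the right is then bounded by $Ch^{2t}$, the last one by reaching back into the proof of Lemma~\ref{lem:bnd2} (the bound on $S_{2,2}$). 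You instead use the classical Ritz-projection-style argument: insert the discrete source solution $\hat T_h(\lambda u)$ as a pivot, control $\hat T_h(\lambda u)-\hat I_h^k(u)$ by Lemma~\ref{lem:err_est}, and control $\hat u_h-\hat T_h(\lambda u)$ by a coercivity--stability argument whose data term $\|\lambda_h u_\K-\lambda u\|_L$ splits exactly into the eigenvalue error (order $h^{2t}$) and the $L^2$ eigenfunction error (order $h^t$) supplied by Theorem~\ref{thm:errest}. Your version is more modular, since it uses only the stated conclusions of Lemma~\ref{lem:err_est}, Lemma~\ref{lem:stab_hatTh} (for the discrete Poincar\'e inequality) and Theorem~\ref{thm:errest}, whereas the paper's version yields the sharper identity, which has independent interest (it quantifies exactly how the energy error decomposes into the $L^2$ eigenfunction error and the eigenvalue error). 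The deduction of \eqref{eq:eigenfunction_H1} from \eqref{eq:eigenfunction_hatah} is essentially identical in both arguments.
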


\begin{proof}
% Since $\alpha=\red{r}=1$, we have $\ell=1$ in~\eqref{eq:esterr_efe};
% moreover, $\mu_h = \lambda^{-1}_h$ is also simple and we drop the index $j$. 
% The bound~\eqref{eq:esterr_efe} becomes $\| u - u_{\K} \|_L \le Ch^t$.
We observe that
\begin{align*}
\lambda_h (u_\K,u)_{L}  ={}& \lambda_h (u_\K,\Pi_\K^k(u))_{L} = \lambda_h b(u_\K,\Pi_\K^k(u)) = a_\K (u_\K,\Pi_\K^k(u)) \\
={}& \hat a_h((u_\K,\ZF(u_\K)), (\Pi_\K^k(u),\ZFd(\Pi_\K^k(u))) \\
={}& \hat a_h((u_\K,\ZF(u_\K)), (\Pi_\K^k(u),\ZFd(\Pi_\K^k(u))) \\
& + \hat a_h((u_\K,\ZF(u_\K)),(0, \Pi_{\F}^k(u) - \ZFd(\Pi_\K^k(u))) \\
={}& \hat a_h( \hat u_h, \hat I_h^k(u) ),
\end{align*}
where we have used the definition of $\Pi_\K^{k}$ and \eqref{eq:celled_evp} in the first line, the definition \eqref{eq:def_aK} of $\hat a_h$ in the second line, the property \eqref{eq:def_ZF} of $Z_{\F,0}$ in the third line, and the definition of $\hat I_h^k$ in the last line.
Setting $\delta_u := \hat a_h(\hat I_h^k(u), \hat I_h^k(u)) - a(u,u)$ and recalling the normalization $\| u \|_{L} = \| u_\K \|_{L} = 1$, we infer that
\begin{align*}
\hat a_h(\hat u_h- \hat I_h^k(u),\hat u_h - \hat I_h^k(u)) 
&= \hat a_h(\hat u_h,\hat u_h) - 2\hat a_h(\hat u_h, \hat I_h^k(u)) + \hat a_h(\hat I_h^k(u), \hat I_h^k(u))  \\
&= \lambda_h \| u_\K \|^2_{L} - 2\lambda_h(u_\K,u)_{L} + \lambda_h \| u \|^2_{L} - (\lambda_h - \lambda) \| u \|^2_{L} + \delta_u  \\
& = \lambda_h\| u_\K-u\|_{L}^2 - \lambda_h + \lambda +  \delta_u,
\end{align*}
which is a generalization of the Pythagorean eigenvalue error identity (see \cite{strang1973analysis}) in the HHO context. The bound~\eqref{eq:eigenfunction_hatah} then follows from the bounds derived in Theorem~\ref{thm:errest} (see in particular the bound on $S_{2,2}$ therein to estimate $\delta_u$). 
Finally, the bound~\eqref{eq:eigenfunction_H1} follows from the definition of the bilinear form $\hat a_h$,  %the identity~\eqref{eq:elliptic_proj}, 
the triangle inequality, and the approximation properties of the elliptic projector.
\end{proof}

\begin{remark}[Optimal convergence] \label{rem:optimal}
If $t=k+1$, we recover a convergence of order $h^{2k+2}$ for the eigenvalues and of order $k^{k+1}$ for the eigenfunctions in the $H^1$-seminorm. 
\end{remark}

\section{Numerical experiments} \label{sec:num}
In this section, we first verify the error estimates from Section \ref{sec:ea} for eigenvalues and smooth eigenfunctions approximated by the HHO method in 1D (unit interval) and in 2D (unit square).
We then study the effect of varying the stabilization parameter and, in particular, we report superconvergence results for 1D uniform meshes when using a particular value of the stabilization parameter. We next consider in 2D the use of polygonal (hexagonal) meshes and we compare our results to those obtained using continuous finite elements. Finally, we present convergence results on an L-shaped domain (which includes the case of a non-smooth eigenfunction) and on the unit disk. 
In all cases, we consider the eigenvalues $\lambda$ and $\lambda_h$ associated with~\eqref{eq:eigen_weak} and~\eqref{eq:vfh}, respectively; both sets of eigenvalues are sorted in an increasing order as $\lambda_1< \lambda_2 \ldots$ and $\lambda_{1,h}<\lambda_{2,h}\ldots$, and we report the normalized eigenvalue errors
$\frac{|\lambda_j - \lambda_{h,j}|}{\lambda_j}$.

\ifNUM

\begin{table}[ht]
\centering 
\begin{tabular}{| c | c || cc | cc | cc | cc |}
\hline
$k$ & $N$ & \multicolumn{2}{c|}{first mode} & \multicolumn{2}{c|}{second mode} & \multicolumn{2}{c|}{fourth  mode} & \multicolumn{2}{c|}{eighth mode} \\[0.1cm] 
 & & error &  order & error &  order  & error &  order & error &  order  \\[0.1cm] \hline
 & 10 & 3.19e-2& ---	& 1.17e-1& ---	& 3.50e-1& ---	& 	6.99e-1&	 ---	 \\[0.1cm]
 & 20 & 8.16e-3&	1.97	& 3.19e-2&	1.87	& 1.17e-1&	1.58	& 3.50e-1&	1.00 \\[0.1cm]
0 & 40 & 2.05e-3&	1.99	& 8.16e-3&	1.97	& 3.19e-2&	1.87	& 1.17e-1&	1.58 \\[0.1cm]
 & 80 & 5.14e-4&	2.00	& 2.05e-3&	1.99	& 8.16e-3&	1.97	& 3.19e-2&	1.87 \\[0.1cm]
 & 160 & 1.28e-4&	2.00	& 5.14e-4&	2.00	& 2.05e-3&	1.99	& 8.16e-3&	1.97 \\[0.1cm] \hline
% & 320 & 3.21e-5&	2.00	& 1.28e-4&	2.00	& 5.14e-4&	2.00	& 2.05e-3&	1.99 \\[0.1cm] \hline
 & 10 &  1.10e-4& ---	& 1.81e-3& ---	& 3.25e-2& ---	& 4.01e-1& --- \\[0.1cm]
 & 20 & 6.78e-6&	4.01	& 1.10e-4&	4.05	& 1.81e-3&	4.16	& 3.25e-2&	3.63 \\[0.1cm]
1 & 40 & 4.23e-7&	4.00	& 6.78e-6&	4.01	& 1.10e-4&	4.05	& 1.81e-3&	4.16 \\[0.1cm]
 & 80 & 2.64e-8&	4.00	& 4.23e-7&	4.00	& 6.78e-6&	4.01	& 1.10e-4&	4.05 \\[0.1cm]
 & 160 & 1.65e-9&	4.00	& 2.64e-8&	4.00	& 4.23e-7&	4.00	& 6.78e-6&	4.01 \\[0.1cm] \hline
% & 320 & 1.01e-10&	4.03	& 1.65e-9&	4.00	& 2.64e-8&	4.00	& 4.23e-7&	4.00 \\[0.1cm] \hline
 & 10 & 1.15e-7& ---	& 	7.52e-6& ---	& 	5.28e-4& ---	& 6.08e-2& ---\\[0.1cm]
 & 20 & 1.79e-9&	6.01	& 1.15e-7&	6.03	& 7.52e-6&	6.13	& 5.28e-4&	6.85 \\[0.1cm]
2 & 40 & 2.78e-11&	6.01	& 1.79e-9&	6.01	& 1.15e-7&	6.03	& 7.52e-6&	6.13 \\[0.1cm]
 & 80 & 9.88e-14&	8.14	& 2.78e-11&	6.01	& 1.79e-9&	6.01	& 1.15e-7&	6.03 \\[0.1cm] \hline
% & 5 & 1.56e-8& ---	& 	4.23e-6& ---	& 1.59e-3& ---	& 5.09e-1& ---	\\[0.1cm]
%3 & 10 & 6.01e-11&	8.02	& 1.56e-8&	 8.09 & 4.23e-6&	8.55	& 1.59e-3&	8.32 \\[0.1cm]
% & 20 & 1.06e-13&	9.14	& 6.00e-11&	 8.02	 & 1.56e-8&	8.09	& 4.23e-6&	8.55 \\[0.1cm]  \hline
\end{tabular}
\caption{Unit interval, relative eigenvalue errors, $\eta =1$.} 
\label{tab:hho1deve}
\end{table}

\subsection{Smooth eigenfunctions in 1D and 2D unit domains}
Let $\Omega =  (0,1)$ or $ \Omega = (0,1) \times (0,1)$ be the unit interval in 1D or the unit square in 2D, respectively. The 1D problem \eqref{eq:pdee} has exact eigenvalues $\lambda_{j} = j^2 \pi^2 $ and corresponding normalized eigenfunctions $u_{j}(x) = \sqrt{2} \sin(j \pi x)$ with $j=1,2,\cdots$, whereas the 2D problem \eqref{eq:pdee} has exact eigenvalues $\lambda_{jk} = \pi^2 (j^2 + k^2)$ and normalized eigenfunctions 
$
u_{jk}(x,y) = 2\sin(j \pi x) \sin(k \pi y)
$
with  $j, k = 1, 2, \cdots.$ 
We discretize the unit interval uniformly with $N\in\{10, 20,40,80,160\}$ elements and the unit square uniformly with $N \times N$ squares with $N\in\{4, 8, 16, 32, 64\}$. 
The default stabilization parameter of the HHO method is $\eta = 1$. 
The relative eigenvalue errors are reported in Table~\ref{tab:hho1deve} in 1D and in Table~\ref{tab:hho2deve2d} in 2D for the first, second, fourth, and eighth eigenvalues and for the polynomial degrees $k\in\{0,1,2\}$.  These tables show good agreement with the convergence order predicted by Theorem~\ref{thm:errest}, i.e., the convergence order for the eigenvalues is indeed $h^{2k+2}$. The $H^1$-seminorm errors on the first, second, fourth, and eighth eigenfunctions in 1D are reported in Table \ref{tab:hho1defe}. We observe a good agreement with the convergence order predicted by Corollary~\ref{cor:H1}, that is, the convergence order for the eigenfunctions in the $H^1$-seminorm is indeed $h^{k+1}$.

\begin{table}[ht]
\centering 
\begin{tabular}{| c | c || cc | cc | cc | cc |}
\hline
$k$ & $N$ & \multicolumn{2}{c|}{first mode} & \multicolumn{2}{c|}{second mode} & \multicolumn{2}{c|}{fourth  mode} & \multicolumn{2}{c|}{eighth mode} \\[0.1cm] 
 & & error &  order & error &  order  & error &  order & error &  order  \\[0.1cm] \hline
 & 4 & 2.51e-1&	---	& 5.11e-1& ---	& 6.36e-1& ---	& 7.39e-1&	 ---\\[0.1cm]
 & 8 & 7.70e-2&	1.70	& 2.16e-1&	1.24	& 2.51e-1&	1.34	& 4.08e-1&	0.86 \\[0.1cm]
0 & 16 & 2.04e-2&	1.92	& 6.57e-2&	1.72	& 7.70e-2&	1.70	& 1.33e-1&	1.61 \\[0.1cm]
 & 32 & 5.18e-3&	1.98	& 1.74e-2&	1.92	& 2.04e-2&	1.92	& 3.73e-2&	1.84 \\[0.1cm]
 & 64 & 1.30e-3&	1.99	& 4.41e-3&	1.98	& 5.18e-3&	1.98	& 9.62e-3&	1.96 \\[0.1cm] \hline
% & 128 & 3.25e-4&	2.00	& 1.11e-3&	1.99	& 1.30e-3&	1.99	& 2.42e-3&	1.99 \\[0.1cm] \hline
 & 4 & 2.27e-2&	---	& 1.62e-1&	---	& 3.32e-1&	---	& 5.10e-1& ---	 \\[0.1cm]
 & 8 & 1.45e-3&	3.97	& 9.75e-3&	4.06	& 2.27e-2&	3.87	& 6.35e-2&	3.01 \\[0.1cm]
1 & 16 & 9.15e-5&	3.98	& 5.96e-4&	4.03	& 1.45e-3&	3.97	& 3.90e-3&	4.02 \\[0.1cm]
 & 32 & 5.74e-6&	3.99	& 3.71e-5&	4.00	& 9.15e-5&	3.98	& 2.45e-4&	3.99 \\[0.1cm]
 & 64 & 3.59e-7&	4.00	& 2.32e-6&	4.00	& 5.74e-6&	3.99	& 1.54e-5&	4.00 \\[0.1cm]\hline
% & 128 & 2.25e-8&	4.00	& 1.45e-7&	4.00	& 3.59e-7&	4.00	& 9.61e-7&	4.00 \\[0.1cm]  \hline
 & 4  & 5.71e-4& ---	& 8.46e-3& ---	& 4.91e-2& ---	& 2.31e-1& ---	 \\[0.1cm]
 & 8 & 8.63e-6&	6.05	& 1.07e-4&	6.30	& 5.71e-4&	6.43	& 2.33e-3&6.64 \\[0.1cm]
2 & 16 & 1.34e-7&	6.01	& 1.62e-6&	6.05	& 8.63e-6&	6.05	& 3.34e-5&6.12 \\[0.1cm]
 & 32 & 2.09e-9&	6.00	& 2.51e-8&	6.01	& 1.34e-7&	6.01	& 5.14e-7&6.02 \\[0.1cm]
 & 64 & 3.26e-11&	6.00	& 3.92e-10&	6.00	& 2.09e-9&	6.00	& 8.01e-9&6.00 \\[0.1cm]  \hline
% & 128 & 2.02e-12&	4.01	& 7.03e-12&	5.80	& 3.31e-11&	5.98	& 1.25e-10&6.00 \\[0.1cm]  \hline
%% & 4  &  6.48e-6& ---	& 2.18e-4& ---	& 2.20e-3& ---	& 2.42e-2& ---	\\[0.1cm]
%%3 & 8  & 2.43e-8&	8.06	& 7.13e-7&	8.25	& 6.48e-6&	8.41	& 4.24e-5&	9.15 \\[0.1cm]
 %%& 16  & 9.46e-11&	8.01	& 2.69e-9&	8.05	& 2.43e-8&	8.06	& 1.52e-7&	8.13 \\[0.1cm] \hline
% & 32  & 1.42e-12&	6.05	& 1.09e-11&	7.95	& 9.45e-11&	8.01	& 5.81e-10&	8.03 \\[0.1cm] \hline
% & 64  & 2.28e-12&	-0.68	& 1.18e-12&	3.20	& 1.03e-12&	6.51	& 2.63e-12&	7.79 \\[0.1cm]
% & 128  & 1.39e-12&	0.71	& 1.38e-12&	-0.23	& 7.12e-13&	0.54	& 5.69e-13&	2.21 \\[0.1cm]  \hline
\end{tabular}
\caption{Unit square, relative eigenvalue errors, $\eta =1$.}
\label{tab:hho2deve2d} 
\end{table}

\begin{table}[ht]
\centering 
\begin{tabular}{| c | c || cc | cc | cc | cc |}
\hline
$k$ & $N$ & \multicolumn{2}{c|}{first mode} & \multicolumn{2}{c|}{second mode} & \multicolumn{2}{c|}{fourth  mode} & \multicolumn{2}{c|}{eighth mode} \\[0.1cm] 
 & & error &  order & error &  order  & error &  order & error &  order  \\[0.1cm] \hline
 & 10 & 2.08e-1&	---	& 9.87e-1&	---	& 4.39e0&	---	& 1.61e+1&	--- \\[0.1cm]
 & 20 & 1.02e-1&	1.03	& 4.16e-1&	1.25	& 1.97e0&	1.15	& 8.78e0&	0.88 \\[0.1cm]
0 & 40 & 5.05e-2&	1.01	& 2.03e-1&	1.03	& 8.32e-1&	1.25	& 3.95e0&	1.15 \\[0.1cm]
 & 80 & 2.52e-2&	1.00	& 1.01e-1&	1.01	& 4.06e-1&	1.03	& 1.66e0&	1.25 \\[0.1cm]
 & 160 & 1.26e-2&	1.00	& 5.04e-2&	1.00	& 2.02e-1&	1.01	& 8.13e-1&	1.03 \\[0.1cm] \hline

 & 10 & 8.17e-3&	---	& 9.87e-2&	---	& 3.33e0&	---	& 1.54e+1&	--- \\[0.1cm]
 & 20 & 2.04e-3&	2.00	& 1.63e-2&	2.60	& 1.97e-1&	4.08	& 6.67e0&	1.21 \\[0.1cm]
1 & 40 & 5.11e-4&	2.00	& 4.09e-3&	2.00	& 3.27e-2&	2.60	& 3.95e-1&	4.08 \\[0.1cm]
 & 80 & 1.28e-4&	2.00	& 1.02e-3&	2.00	& 8.17e-3&	2.00	& 6.53e-2&	2.60 \\[0.1cm]
 & 160 & 3.19e-5&	2.00	& 2.55e-4&	2.00	& 2.04e-3&	2.00	& 1.63e-2&	2.00 \\[0.1cm] \hline

 & 10 & 2.17e-4&	---	& 4.36e-3&	---	& 1.52e0&	---	& 1.54e+1&	--- \\[0.1cm]
 & 20 & 2.71e-5&	3.00	& 4.34e-4&	3.33	& 8.71e-3&	7.45	& 3.05e0&	2.33 \\[0.1cm]
2 & 40 & 3.39e-6&	3.00	& 5.42e-5&	3.00	& 8.67e-4&	3.33	& 1.74e-2&	7.45 \\[0.1cm]
 & 80 & 4.24e-7&	3.00	& 6.78e-6&	3.00	& 1.08e-4&	3.00	& 1.73e-3&	3.33 \\[0.1cm]
 & 160 & 5.30e-8&	3.00	& 8.47e-7&	3.00	& 1.36e-5&	3.00	& 2.17e-4&	3.00 \\[0.1cm] \hline

% & 10 & 4.29e-6&	---	& 2.06e-4&	---	& 1.85e0&	---	& 1.38e+1&	--- \\[0.1cm]
% & 20 & 2.68e-7&	4.00	& 8.58e-6&	4.59	& 4.13e-4&	12.13	& 3.69e0&	1.90 \\[0.1cm]
%3 & 40 & 1.68e-8&	4.00	& 5.37e-7&	4.00	& 1.72e-5&	4.59	& 8.26e-4&	12.13 \\[0.1cm]
% & 80 & 1.05e-9&	4.00	& 3.35e-8&	4.00	& 1.07e-6&	4.00	& 3.43e-5&	4.59 \\[0.1cm]
% & 160 & 7.50e-11&	3.81	& 2.10e-9&	4.00	& 6.71e-8&	4.00	& 2.15e-6&	4.00 \\[0.1cm] \hline
 \end{tabular}
\caption{Unit interval, $H^1$-seminorm errors on eigenfunctions, $\eta =1$.}
\label{tab:hho1defe}
\end{table}

\subsubsection{Effect of the stabilization parameter $\eta$} \label{sec:supc}

We first report some striking superconvergence results for the HHO method with the stabilization parameter set to $\eta = 2k+3$ on 1D uniform meshes. In this case, we observe numerically two extra orders in the convergence of the relative eigenvalue errors, i.e., these errors now converge as $h^{2k+4}$, see Table \ref{tab:hho1devesup}. We thus obtain relative eigenvalue errors close to machine precision already on relatively coarse meshes. Moreover, we observe numerically (results are not reported for brevity) that taking values different from $2k+3$ for the stabilization parameter does not improve the relative eigenvalue errors. We also point out that the choice $\eta=2k+3$ does not increase the convergence order of the eigenfunctions. In 2D, we observe that the choice $\eta = 2k+3$ improves the approximation significantly in the sense of a much smaller constant $C$ in \eqref{eq:esterr_eve}, but the convergence order remains $h^{2k+2}$. The results are reported in Table \ref{tab:hho2deves} (compare with Table \ref{tab:hho2deve2d}). The theoretical analysis of the above observations is postponed to future work. 

\begin{table}[ht]
\centering 
\begin{tabular}{| c | c || cc | cc | cc | cc |}
\hline
$k$ & $N$ & \multicolumn{2}{c|}{first mode} & \multicolumn{2}{c|}{second mode} & \multicolumn{2}{c|}{fourth  mode} & \multicolumn{2}{c|}{eighth mode} \\[0.1cm] 
 & & error &  order & error &  order  & error &  order & error &  order \\[0.1cm] \hline
 & 10 &4.07e-5& --- & 6.59e-4& --- &1.10e-2& --- & 1.80e-1 & --- \\[0.1cm]
 & 20 &2.54e-6& 4.00	& 4.07e-5&	4.02	& 6.59e-4&	4.05	& 1.10e-2&	4.04 \\[0.1cm]
0 & 40 & 1.59e-7& 4.00	& 2.54e-6&	4.00	& 4.07e-5&	4.02	& 6.59e-4&	4.05 \\[0.1cm]
 & 80 & 9.91e-9& 4.00& 1.59e-7&4.00	& 2.54e-6&	4.00	& 4.07e-5&	4.02 \\[0.1cm]
 & 160 & 6.19e-10& 4.00& 9.91e-9&4.00 & 1.59e-7& 4.00 & 2.54e-6& 4.00 \\[0.1cm]\hline
% & 320 & 4.18e-11& 3.89& 6.21e-10&4.00 & 9.91e-9& 4.00 & 1.59e-7& 4.00 \\[0.1cm] \hline
 & 5 & 1.66e-6& --- & 1.13e-4& --- & 1.19e-2& --- & 1.74e-2& --- \\[0.1cm]
 & 10 & 2.55e-8& 6.02	& 1.66e-6&	6.09	& 1.13e-4&	6.72	& 1.19e-2&	0.54 \\[0.1cm]
1 & 20 & 3.98e-10& 6.00 & 2.55e-8& 6.02 & 1.66e-6& 6.09 & 1.13e-4& 6.72 \\[0.1cm]
 & 40 & 5.95e-12& 6.06 & 3.98e-10& 6.00 & 2.55e-8& 6.02 & 1.66e-6& 6.09 \\[0.1cm] \hline
 & 4 & 9.18e-9& --- &2.42e-6 & --- & 1.34e-2 & --- & 5.20e-1& ---	 \\[0.1cm]
2 & 8 & 3.57e-11&	 8.01 &9.18e-9& 8.04 & 2.42e-6& 12.43 &1.34e-2& 5.28 \\[0.1cm]
 & 16 & 1.04e-13& 8.42 & 3.57e-11& 8.01 & 9.18e-9 & 8.04 & 2.42e-6& 12.43 \\[0.1cm] \hline
\end{tabular}
\caption{Unit interval, relative eigenvalue errors, $\eta =2k+3$.}
\label{tab:hho1devesup}
\end{table}

\begin{table}[ht]
\centering 
\begin{tabular}{| c | c || cc | cc | cc | cc |}
\hline
$k$ & $N$ & \multicolumn{2}{c|}{first mode} & \multicolumn{2}{c|}{second mode} & \multicolumn{2}{c|}{fourth  mode} & \multicolumn{2}{c|}{eighth mode} \\[0.1cm] 
 & & error &  order & error &  order  & error &  order & error &  order  \\[0.1cm] \hline
 & 4 &  4.23e-2&	---	& 1.41e-1&	---	& 1.66e-1&	---	& 3.97e-1&	--- \\[0.1cm]
 & 8 & 1.06e-2&	1.99	& 3.60e-2&	1.97	& 4.23e-2&	1.97	& 7.84e-2&	2.34 \\[0.1cm]
0 & 16 & 2.66e-3&	2.00	& 9.04e-3&	1.99	& 1.06e-2&	1.99	& 1.98e-2&	1.99 \\[0.1cm]
 & 32 & 6.65e-4&	2.00	& 2.26e-3&	2.00	& 2.66e-3&	2.00	& 4.96e-3&	2.00 \\[0.1cm]
 & 64 & 1.66e-4&	2.00	& 5.66e-4&	2.00	& 6.65e-4&	2.00	& 1.24e-3&	2.00 \\[0.1cm]\hline
% & 128 & 4.16e-5&	2.00	& 1.41e-4&	2.00	& 1.66e-4&	2.00	& 3.10e-4&	2.00 \\[0.1cm] \hline
 & 4 &  2.74e-4&	---	& 3.33e-3&	---	& 5.80e-5&	---	& 1.73e-2&	--- \\[0.1cm]
 & 8 & 2.13e-5&	3.69	& 1.69e-4&	4.30	& 2.74e-4&	-2.24	& 1.75e-4&	6.63 \\[0.1cm]
1 & 16 & 1.40e-6&	3.93	& 9.93e-6&	4.09	& 2.13e-5&	3.69	& 3.47e-6&	5.66 \\[0.1cm]
 & 32 & 8.82e-8&	3.98	& 6.11e-7&	4.02	& 1.40e-6&	3.93	& 4.41e-7&	2.97 \\[0.1cm]
 & 64 & 5.53e-9&	4.00	& 3.80e-8&	4.01	& 8.82e-8&	3.98	& 3.11e-8&	3.83 \\[0.1cm]\hline
% & 128 & 3.46e-10&	4.00	& 2.37e-9&	4.00	& 5.53e-9&	4.00	& 2.00e-9&	3.96 \\[0.1cm] \hline
 & 4 &  1.75e-5&	---	& 3.33e-5&	---	& 8.23e-4&	---	& 1.28e-3&	--- \\[0.1cm]
 & 8 & 2.90e-7&	5.91	& 8.50e-7&	5.29	& 1.75e-5&	5.56	& 4.54e-5&	4.82 \\[0.1cm]
2 & 16 & 4.60e-9&	5.98	& 1.45e-8&	5.87	& 2.90e-7&	5.91	& 8.01e-7&	5.82 \\[0.1cm]
 & 32 & 7.20e-11&	6.00	& 2.32e-10&	5.97	& 4.60e-9&	5.98	& 1.29e-8&	5.96 \\[0.1cm]
 & 64 & 2.66e-13&	8.08	& 3.02e-12&	6.26	& 7.20e-11&	6.00	& 2.02e-10&	5.99 \\[0.1cm] \hline
\end{tabular}
\caption{Unit square, Relative eigenvalue errors, $\eta =2k+3$.}
\label{tab:hho2deves} 
\end{table}

In all of our numerical experiments, the default choice $\eta=1$ for the stabilization parameter produces satisfactory results. As expected, decreasing the value of $\eta$ progressively leads to a loss of stability in the HHO stiffness matrix, and therefore to a degradation of the accuracy of the discrete eigenvalues and eigenfunctions. To illustrate this simple fact, we report in Table \ref{tab:eta} the first four discrete eigenvalues using a polynomial degree $k\in\{0,1,2\}$ and a stabilization parameter $\eta = 2^{-l}$, $l\in\{0,\ldots,6\}$. We consider here a quasi-uniform sequence of triangular meshes with an initial average mesh-size $0.017$, where  the next finer mesh in the sequence is produced by dividing each triangle into four congruent sub-triangles. The results reported in Table \ref{tab:eta} indicate that the sensitivity to the choice of a too small value of $\eta$ swiftly decreases as the polynomial degree $k$ increases. A similar study varying the stabilization parameter in the context of the VEM can be found in \cite{mora2015virtual}, where the loss of accuracy also follows from the loss of stability if the value assigned to the stabilization parameter is too low.

\begin{table}[ht]
\centering 
\begin{tabular}{| c | c || c | c | c | c |}
\hline
%$k$ & $\eta$ & \multicolumn{2}{c|}{first mode} & \multicolumn{2}{c|}{second mode} & \multicolumn{2}{c|}{fourth  mode} & \multicolumn{2}{c|}{eighth mode} \\[0.1cm] 
$k$ & $\eta$ & $\lambda_{h,1}$ & $\lambda_{h,2}$ & $\lambda_{h,3}$ & $\lambda_{h,4}$  \\[0.1cm] \hline
 
%&	1/32&	9.31e0&	---   &	---   & --- \\[0.1cm] 
%&	1/16&	1.27e1&	2.04e1&	2.08e1&	--- \\[0.1cm] 
%&	1/8&	1.55e1&	2.92e1&	2.97e1&	3.69e1 \\[0.1cm] 
&	1/4&	1.74e1&	3.70e1&	3.73e1&	5.11e1 \\[0.1cm] 
0&	1/2&	1.85e1&	4.25e1&	4.27e1&	6.26e1 \\[0.1cm] 
&	1&	1.92e1&	4.60e1&	4.60e1&	7.05e1 \\[0.1cm] \hline
%0&	2&	1.95e1&	4.79e1&	4.79e1&	7.51e1 \\[0.1cm] 
%0&	4&	1.97e1&	4.88e1&	4.89e1&	7.77e1 \\[0.1cm] 
%0&	8&	1.97e1&	4.93e1&	4.94e1&	7.90e1 \\[0.1cm]  \hline
&	1/64&	1.57e1&	5.09e1&	5.11e1&	8.16e1 \\[0.1cm]  
%&	1/32&	1.95e1&	5.31e1&	5.34e1&	8.31e1 \\[0.1cm] 
%&	1/16&	1.97e1&	4.66e1&	4.67e1&	6.97e1 \\[0.1cm] 
1&	1/8&	1.97e1&	4.87e1&	4.87e1&	7.54e1 \\[0.1cm] 
%&	1/4&	1.97e1&	4.91e1&	4.91e1&	7.77e1 \\[0.1cm] 
%&	1/2&	1.97e1&	4.92e1&	4.92e1&	7.84e1 \\[0.1cm] 
&	1&	1.97e1&	4.93e1&	4.93e1&	7.87e1 \\[0.1cm] \hline
%1&	2&	1.97e1&	4.93e1&	4.93e1&	7.89e1 \\[0.1cm] 
%1&	4&	1.97e1&	4.93e1&	4.93e1&	7.89e1 \\[0.1cm] 
%1&	8&	1.97e1&	4.94e1&	4.94e1&	7.90e1 \\[0.1cm]  \hline
&	1/64&	1.97e1&	4.94e1&	4.94e1&	7.90e1 \\[0.1cm] 
%&	1/32&	1.97e1&	4.93e1&	4.94e1&	7.91e1 \\[0.1cm] 
%&	1/16&	1.97e1&	4.93e1&	4.93e1&	7.89e1 \\[0.1cm] 
2&	1/8&	1.97e1&	4.93e1&	4.93e1&	7.89e1 \\[0.1cm] 
%&	1/4&	1.97e1&	4.93e1&	4.93e1&	7.89e1 \\[0.1cm] 
%&	1/2&	1.97e1&	4.93e1&	4.93e1&	7.90e1 \\[0.1cm] 
&	1&	1.97e1&	4.93e1&	4.93e1&	7.90e1 \\[0.1cm]  \hline

\end{tabular}
\caption{Discrete eigenvalues $\lambda_{h,j}$, $j\in\{1,2,3,4\}$ with polynomial degree $k\in\{0,1,2\}$ and stabilization parameter $\eta = 2^{-l}$, $l\in\{0,\ldots,6\}$.}
\label{tab:eta} 
\end{table}

% \begin{table}[ht]
% \color{red}
% \centering 
% \begin{tabular}{| c || cc | cc | cc | cc |}
% \hline
% %$k$ & $\eta$ & \multicolumn{2}{c|}{first mode} & \multicolumn{2}{c|}{second mode} & \multicolumn{2}{c|}{fourth  mode} & \multicolumn{2}{c|}{eighth mode} \\[0.1cm] 
% $\ell$ & $\lambda_{h,1}$ &  Pos. & $\lambda_{h,2}$ &  Pos.  & $\lambda_{h,3}$ &  Pos.  & $\lambda_{h,4}$ &  Pos.   \\[0.1cm] \hline
 
% -1&	2.07e1&	109&	5.31e1&	110&	5.41e1&	111&	8.54e1&	112 \\[0.1cm] 
% 0&	1.57e1&	93&	5.09e1&	434&	5.11e1&	435&	8.16e1&	436 \\[0.1cm] 
% 1&	1.97e1&	1&	4.85e1&	2&	4.85e1&	3&	6.89e1&	554 \\[0.1cm] 
% 2&	1.97e1&	1&	4.93e1&	2&	4.93e1&	3&	7.89e1&	4 \\[0.1cm] \hline

% \end{tabular}
% \caption{\red{Approximated physical eigenvalues $\lambda_{h,j}, j=1,2,3,4$ and their positions (Pos.) in the computed spectrum when using HHO(1) with $\eta = 1/64$ on various levels of mesh refinements and coarsening when $\ell=-1$. }}
% \label{tab:eta1} 
% \end{table}

\subsubsection{Polygonal (hexagonal) meshes in 2D}

To illustrate the fact that the same convergence orders can be obtained if the
HHO method is deployed on general meshes, we consider now a quasi-uniform sequence of polygonal (hexagonal) meshes of the unit square; see Figure \ref{fig:hex}.
%for an illustration of such a mesh, we refer the reader to the rightmost panel of \cite[Fig.~1]{DiPEL2014arbitrary} and . 
The coarsest mesh in the sequence is composed of predominantly hexagonal cells with average mesh-size 0.065; the average mesh-size is halved from one mesh in the sequence to the next finer mesh. Table \ref{tab:hho2dhex} shows the relative eigenvalue errors for $k\in\{0,1,2\}$ with stabilization parameter $\eta=1$ and $\eta = 2k+3$ for the first ($j=1$) and third ($j=3$) eigenpairs. We observe a convergence of order $h^{2k+2}$, in agreement with Theorem~\ref{thm:errest}. Once again, the choice $\eta=2k+3$ for the stabilization parameter does not change the convergence order, but substantially improves the constant $C$. 

\begin{figure}[ht]
\centering
\includegraphics[width=6.6cm]{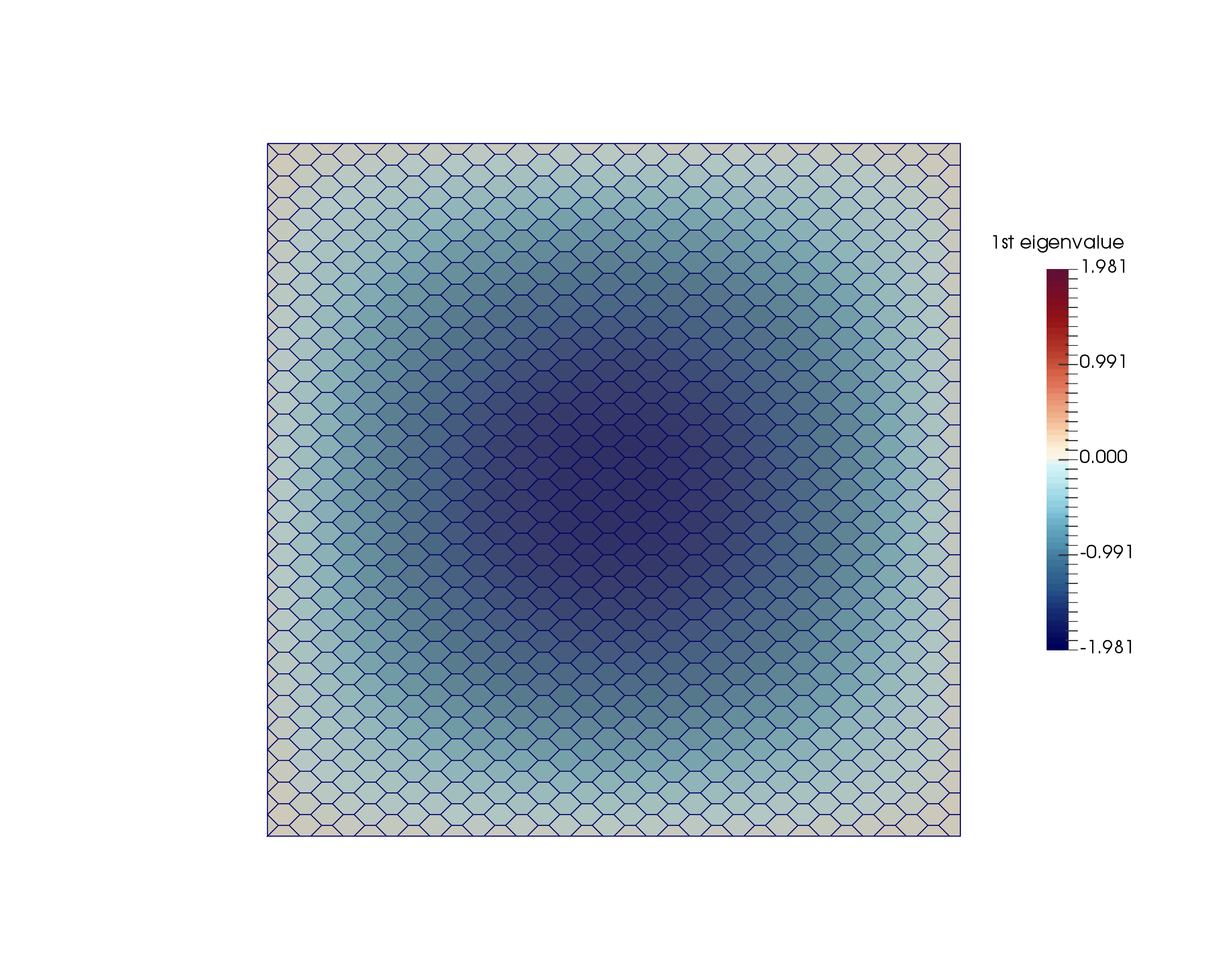} \hspace{0.1cm}
\includegraphics[width=6.6cm]{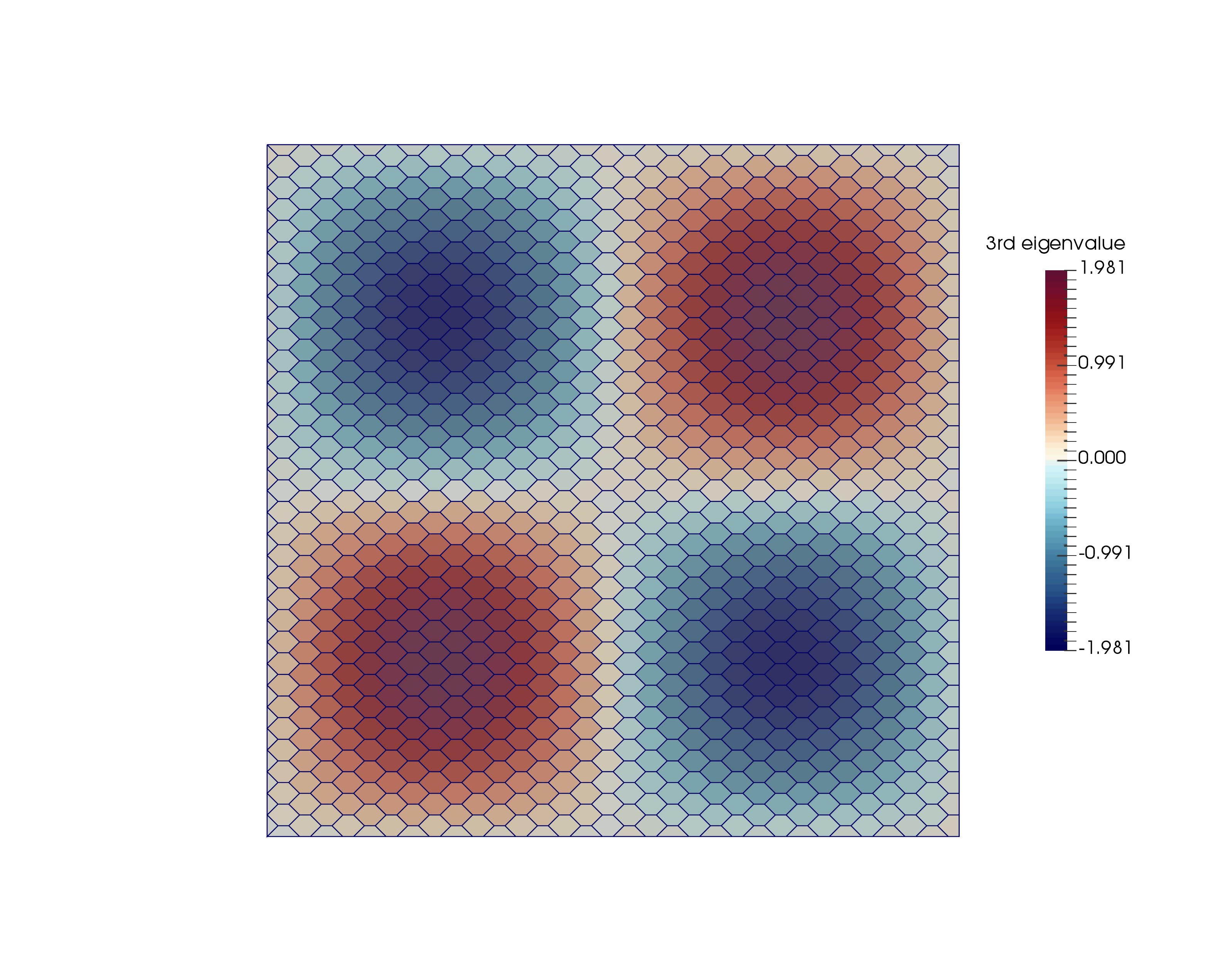} 
\caption{First and third approximate eigenfunctions with hexagonal meshes.}
\label{fig:hex}
\end{figure}

\begin{table}[ht]
\centering 
\begin{tabular}{| c | c || cc | cc || cc | cc |}
\hline
$k$ & $\ell$ & \multicolumn{2}{c|}{$j=1, \eta=1$} & \multicolumn{2}{c||}{$j=1, \eta=2k+3$} & \multicolumn{2}{c|}{$j=3, \eta=1$} & \multicolumn{2}{c|}{$j=3, \eta=2k+3$} \\[0.1cm] 
 & & error &  order & error &  order  & error &  order & error &  order  \\[0.1cm] \hline

 & 0 & 3.20e-1&	---	& 8.72e-2&	---	& 6.69e-1&	---	& 2.78e-1&	--- \\[0.1cm]
 & 1 & 1.19e-1&	1.43	& 2.21e-2&	1.98	& 3.52e-1&	0.93	& 8.41e-2&	1.73 \\[0.1cm]
0 & 2 & 3.56e-2&	1.74	& 5.45e-3&	2.02	& 1.28e-1&	1.45	& 2.15e-2&	1.97 \\[0.1cm]
 & 3 & 9.60e-3&	1.89	& 1.35e-3&	2.02	& 3.73e-2&	1.78	& 5.37e-3&	2.00 \\[0.1cm]
 & 4 & 2.48e-3&	1.95	& 3.34e-4&	2.01	& 9.84e-3&	1.92	& 1.33e-3&	2.01 \\[0.1cm] \hline

 & 0 & 1.97e-2&	---	& 1.10e-3&	---	& 3.16e-1&	---	& 1.43e-2&	--- \\[0.1cm]
 & 1 & 1.33e-3&	3.89	& 8.28e-5&	3.73	& 2.40e-2&	3.72	& 1.32e-3&	3.43 \\[0.1cm]
1 & 2 & 8.74e-5&	3.92	& 5.63e-6&	3.88	& 1.45e-3&	4.05	& 9.01e-5&	3.88 \\[0.1cm]
 & 3 & 5.64e-6&	3.95	& 3.66e-7&	3.94	& 9.11e-5&	3.99	& 5.86e-6&	3.94 \\[0.1cm]
 & 4 & 3.59e-7&	3.97	& 2.33e-8&	3.97	& 5.75e-6&	3.98	& 3.73e-7&	3.97 \\[0.1cm] \hline

 & 0 & 2.99e-4&	---	& 1.09e-5&	---	& 4.09e-2&	---	& 8.54e-4&	--- \\[0.1cm]
 & 1 & 5.15e-6&	5.86	& 2.26e-7&	5.59	& 3.63e-4&	6.82	& 1.42e-5&	5.91 \\[0.1cm]
2 & 2 & 8.55e-8&	5.91	& 3.97e-9&	5.83	& 5.59e-6&	6.02	& 2.53e-7&	5.81 \\[0.1cm]
 & 3 & 1.38e-9&	5.95	& 6.52e-11&	5.93	& 8.88e-8&	5.98	& 4.17e-9&	5.92 \\[0.1cm]
 & 4 & 2.21e-11&	5.97	& 9.32e-13&	6.13	& 1.41e-9&	5.98	& 6.69e-11&	5.96 \\[0.1cm] \hline
\end{tabular}
\caption{Unit square with hexagonal meshes, relative eigenvalue errors, $\eta =1 $ and $\eta =2k+3$.}
\label{tab:hho2dhex} 
\end{table}

\subsubsection{Comparison with the finite element method (FEM)} \label{sec:hhofem}

We now present a brief comparison between the discrete eigenvalues obtained using a continuous linear finite element method (FEM(1)) and the HHO method with $k=0$ and $k=1$ (referred to as HHO(0) and HHO(1), respectively). We consider the same quasi-uniform sequence of triangular meshes as in Section~\ref{sec:supc}, and we use the stabilization parameter $\eta=1$ or $\eta=8$ for HHO(0) and $\eta=1$ for HHO(1). Table \ref{tab:hho-fem} reports the errors for the first and eighth eigenvalues. All the reported convergence orders match the theoretical predictions. HHO(0) with $\eta=1$ leads to somewhat larger errors than FEM(1), but the situation is significantly reversed when using HHO(0) with $\eta=8$ or HHO(1) with $\eta=1$. We also mention that our numerical experiments show that the overall costs of FEM(1) and HHO(0) on various domains and mesh configurations are roughly the same.

\begin{table}[ht]
\centering 
\begin{tabular}{ | c | cccc | cccc |}
\hline
 $\ell$ & \multicolumn{4}{c|}{first eigenvalue} & \multicolumn{4}{c|}{eighth eigenvalue} \\[0.1cm]
 & \multicolumn{1}{c}{FEM(1)} & \multicolumn{2}{c}{HHO(0)} & \multicolumn{1}{c|}{HHO(1)}  & \multicolumn{1}{c}{FEM(1)} & \multicolumn{2}{c}{HHO(0)} & \multicolumn{1}{c|}{HHO(1)}   \\[0.1cm]   \hline
  & - - - &  $\eta=1$ & $\eta=8$ & $\eta=1$ &  - - - &  $\eta=1$ & $\eta=8$ & $\eta=1$  \\[0.1cm] \hline
 
0&1.75e-2&	2.92e-2&	9.72e-5&	1.48e-4&	1.15e-1&	1.64e-1&	2.76e-3&	7.28e-3 \\[0.1cm]
1&4.36e-3&	7.47e-3&	2.57e-5&	9.45e-6&	2.79e-2&	4.65e-2&	8.48e-4&	4.68e-4 \\[0.1cm]
2&1.09e-3&	1.88e-3&	6.81e-6&	5.97e-7&	6.94e-3&	1.20e-2&	2.21e-4&	2.99e-5 \\[0.1cm]
3&2.72e-4&	4.70e-4&	1.75e-6&	3.76e-8&	1.73e-3&	3.03e-3&	5.59e-5&	1.89e-6 \\[0.1cm]
4&6.81e-5&	1.18e-4&	4.40e-7&	2.36e-9&	4.33e-4&	7.59e-4&	1.40e-5&	1.19e-7 \\[0.1cm] \hline
order &2.00 &	1.99&	1.95&	3.99&	2.01&	1.95&	1.92&	3.98 \\[0.1cm]\hline
  
%0&	7.05e-2&	1.08e-1&	2.32e-3&	4.75e-1&	4.55e-1&	1.03e-1 \\[0.1cm]
%1&	1.75e-2&	2.92e-2&	1.48e-4&	1.15e-1&	1.64e-1&	7.28e-3 \\[0.1cm]
%2&	4.36e-3&	7.47e-3&	9.45e-6&	2.79e-2&	4.65e-2&	4.68e-4 \\[0.1cm]
%3&	1.09e-3&	1.88e-3&	5.97e-7&	6.94e-3&	1.20e-2&	2.99e-5 \\[0.1cm]
%4&	2.72e-4&	4.70e-4&	3.76e-8&	1.73e-3&	3.03e-3&	1.89e-6  \\[0.1cm]\hline
%$\rho$ & 2.00&	1.96&	3.98&	2.02&	1.82&	3.94 \\[0.1cm]\hline
\end{tabular}
\caption{Comparison of eigenvalue errors for $\lambda_{h,1}$ and $\lambda_{h,8}$ when using FEM(1), HHO(0) with $\eta=1$ or $\eta=8$, and HHO(1) with $\eta=1$.}
\label{tab:hho-fem} 
\end{table}

\subsection{L-shaped domain}
We now study the Laplacian eigenvalue problem on the L-shaped domain $\Omega = \Omega_0 \backslash \Omega_1$, where $\Omega_0 = (0, 2) \times (0, 2)$ and $\Omega_1 = [1, 2] \times [1, 2]$. The L-shaped domain $\Omega$ has a reentrant corner at the point $(1,1)$, which results in possibly non-smooth eigenfunctions. In fact, the first eigenfunction is in $H^{1+t}(\Omega)$ with $t=\frac23-\epsilon$ with $\epsilon$ arbitrarily small, and the corresponding eigenvalue is $\lambda_1 = 9.6397238440219$ 
% it was 9.63972384464540 in HDG paper, but their reference gives the number ending 219
\cite{betcke2005reviving}. There are also smooth eigenfunctions. For example, the third eigenfunction is smooth and the corresponding eigenvalue is known exactly to be $\lambda_3 = 2\pi^2$. Figure \ref{fig:lshape} shows the HHO approximations (with $\eta = 1$) of the first and the third eigenfunctions using quasi-uniform triangulations of $\Omega$ and the polynomial degree $k=1$. 

\begin{table}[ht]
\centering 
\begin{tabular}{| c | c || cc | cc || cc | cc |}
\hline
$k$ & $N$ & \multicolumn{2}{c|}{$j=1, \eta=1$} & \multicolumn{2}{c|}{$j=1, \eta=2k+3$} & \multicolumn{2}{c|}{$j=3, \eta=1$} & \multicolumn{2}{c|}{$j=3, \eta=2k+3$} \\[0.1cm] 
 & & error &  order & error &  order  & error &  order & error &  order  \\[0.1cm] \hline
 & 4 & 2.36e-1&	---	& 1.25e-1&	---	& 3.60e-1&	---	& 1.82e-1&	--- \\[0.1cm]
 & 8 & 7.79e-2&	1.60	& 4.12e-2&	1.61	& 1.24e-1&	1.54	& 5.32e-2&	1.77 \\[0.1cm]
0 & 16 & 2.37e-2&	1.72	& 1.37e-2&	1.59	& 3.42e-2&	1.86	& 1.39e-2&	1.94 \\[0.1cm]
 & 32 & 7.32e-3&	1.70	& 4.75e-3&	1.53	& 8.77e-3&	1.96	& 3.52e-3&	1.98 \\[0.1cm]
 & 64 & 2.36e-3&	1.63	& 1.71e-3&	1.47	& 2.21e-3&	1.99	& 8.82e-4&	2.00 \\[0.1cm] \hline
 & 4 & 2.08e-2&	---	& 1.04e-2&	---	& 2.24e-2&	---	& 4.62e-3&	--- \\[0.1cm]
 & 8 & 5.92e-3&	1.81	& 4.12e-3&	1.34	& 1.37e-3&	4.04	& 2.77e-4&	4.06 \\[0.1cm]
1 & 16 & 2.18e-3&	1.44	& 1.64e-3&	1.33	& 8.50e-5&	4.01	& 1.72e-5&	4.01 \\[0.1cm]
 & 32 & 8.55e-4&	1.35	& 6.51e-4&	1.33	& 5.31e-6&	4.00	& 1.07e-6&	4.00 \\[0.1cm]
 & 64 & 3.39e-4&	1.34	& 2.58e-4&	1.33	& 3.32e-7&	4.00	& 6.71e-8&	4.00 \\[0.1cm] \hline
% & 4 & 6.52e-3&	---	& 4.35e-3&	---	& 1.11e-3&	---	& 6.08e-5&	--- \\[0.1cm]
% & 8 & 2.48e-3&	1.39	& 1.73e-3&	1.33	& 1.54e-5&	6.17	& 9.33e-7&	6.03 \\[0.1cm]
%2 & 16 & 9.82e-4&	1.34	& 6.87e-4&	1.33	& 2.34e-7&	6.04	& 1.44e-8&	6.01 \\[0.1cm]
% & 32 & 3.89e-4&	1.33	& 2.73e-4&	1.33	& 3.63e-9&	6.01	& 2.24e-10&	6.01 \\[0.1cm]
% & 64 & 1.54e-4&	1.33	& 1.08e-4&	1.33	& 5.65e-11&	6.01	& 3.52e-12&	5.99 \\[0.1cm] \hline
\end{tabular}
\caption{L-shaped domain, relative eigenvalue errors, $\eta =1 $ and $\eta =2k+3$.}
\label{tab:hho2dlshape} 
\end{table}

\begin{figure}[ht]
\centering
\includegraphics[width=6.2cm]{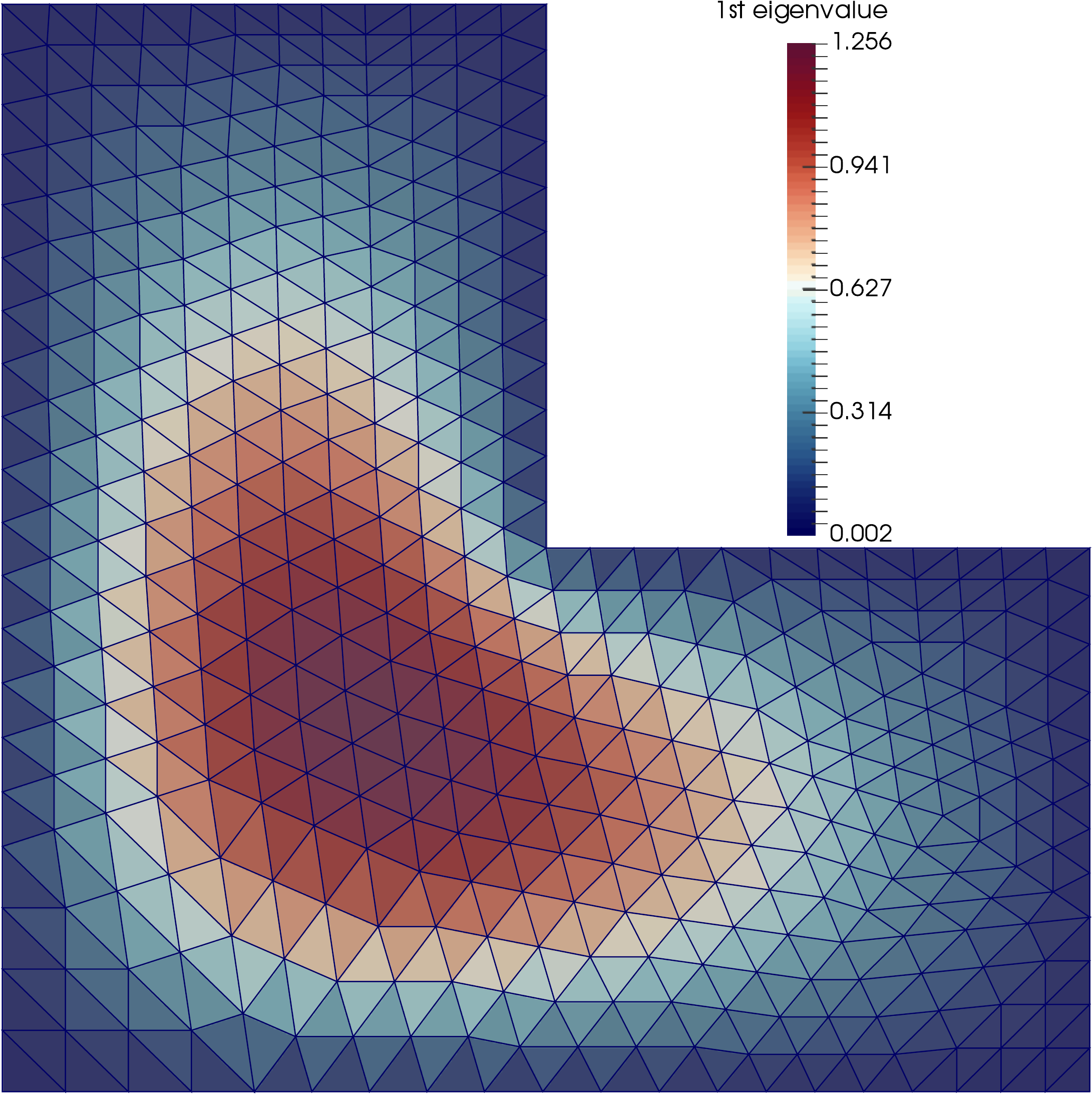} \hspace{0.2cm}
\includegraphics[width=6.2cm]{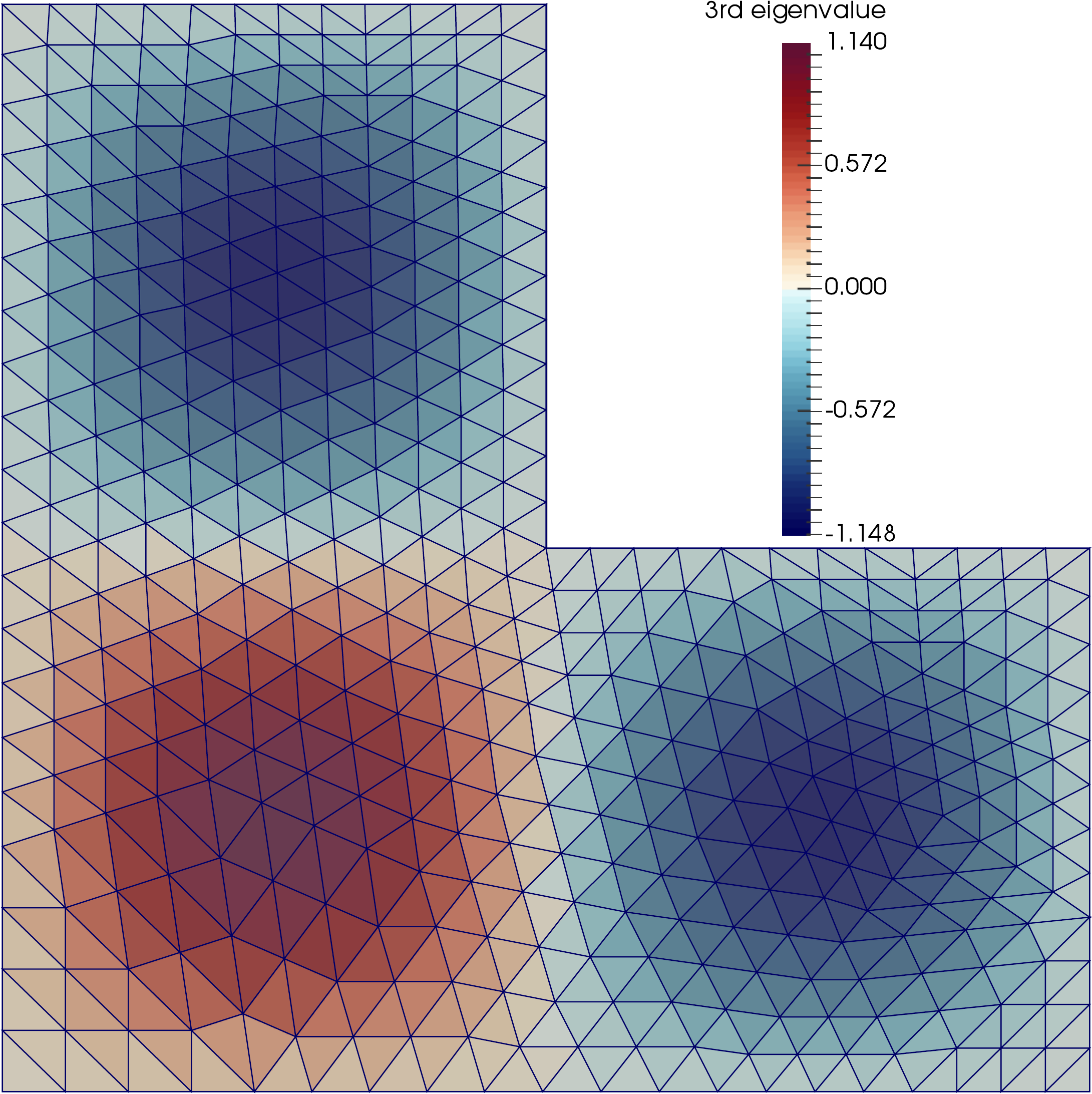} 
\caption{First and third approximate eigenfunctions in the L-shaped domain.}
\label{fig:lshape}
\end{figure}
%,height=7cm

To assess the convergence orders, we consider a sequence of triangulations where
each of the three unit squares composing the L-shaped domain $\Omega$ is discretized uniformly with $2\times N\times N$ triangular elements, where $N\in\{4,8,16,32,64\}$. Table \ref{tab:hho2dlshape} reports the relative eigenvalue errors for the first and third eigenvalues. We consider the values $\eta = 1$ and $\eta = 2k+3$ for the stabilization parameter together with the polynomial degrees $k\in\{0,1\}$. The relative error on the first eigenvalue converges with order $h^{2t}$ with $2t\approx\frac43$, whereas the relative error on the third eigenvalue converges with the optimal order $h^{2k+2}$. These results are again in agreement with Theorem~\ref{thm:errest}. The errors with $\eta = 2k+3$ are, as observed above, smaller than those with $\eta=1$. Comparing with the results reported in \cite{gopalakrishnan2015spectral} with HDG and $k=0$, the HHO approximation of the first eigenvalue converges with order $h^{4/3}$ whereas the HDG approximation converges with order $h$; the HHO approximation of the third eigenvalue converges with order $h^2$ whereas the HDG approximation converges with order $h$. 
Additionally, Table \ref{tab:lshapeH1} shows the eigenfunction errors in the $H^1$-seminorm for the first and third modes.  Here, the sequence of triangular meshes starts with an initial mesh-size 0.052, and the refinement procedure is the same as above. We use a linear FEM solution (normalized in $L^2(\Omega)$) solved at level 7 of the mesh sequence (this corresponds to a mesh-size $4.05 \times 10^{-4}$) as a reference solution for the calculation of the first eigenfunction error, and we use $u_3(x,y) = 2\sin(\pi x) \sin(\pi y)/\sqrt{3}$ (normalized in $L^2(\Omega)$) to compute the error on the third eigenfunction. In both cases, the error convergence rates are in good agreement with Theorem~\ref{thm:errest}.

\begin{table}[ht]
\centering 
\begin{tabular}{ | c | cccc | cccc |}
\hline
 $\ell$ & \multicolumn{4}{c|}{first mode} & \multicolumn{4}{c|}{third mode} \\[0.1cm] \hline
 & \multicolumn{2}{c}{$k=0$} & \multicolumn{2}{c|}{$k=1$} & \multicolumn{2}{c}{$k=0$} & \multicolumn{2}{c|}{$k=1$}  \\[0.1cm]   \hline
  & $\eta=1$ &  $\eta=3$ & $\eta=1$ &  $\eta=3$  & $\eta=1$ &  $\eta=3$ &  $\eta=1$ &  $\eta=3$ \\[0.1cm] \hline

0&	1.38e0&	1.33e0&	3.80e-1&	3.30e-1&	2.36e0&	2.16e0&	4.73e-1&	4.36e-1 \\[0.1cm]
1&	7.42e-1&	7.31e-1&	2.21e-1&	1.98e-1&	1.20e0&	1.14e0&	1.09e-1&	1.07e-1 \\[0.1cm]
2&	4.09e-1&	4.08e-1&	1.40e-1&	1.26e-1&	5.88e-1&	5.79e-1&	2.68e-2&	2.66e-2 \\[0.1cm]
3&	2.34e-1&	2.34e-1&	9.09e-2&	8.28e-2&	2.92e-1&	2.90e-1&	6.65e-3&	6.63e-3 \\[0.1cm]
4&	1.39e-1&	1.39e-1&	6.18e-2&	5.68e-2&	1.46e-1&	1.45e-1&	1.62e-3&	1.62e-3 \\[0.1cm] \hline
order &	0.83&	0.81&	0.65&	0.63&	1.01&	0.98&	2.04&	2.01 \\[0.1cm] \hline
  
\end{tabular}
\caption{L-shaped domain, first and third eigenfunction errors in the $H^1$-seminorm with polynomial degree $k\in\{0,1\}$ and stabilization parameter $\eta\in\{1, 2k+3\}$. }
\label{tab:lshapeH1} 
\end{table}

\subsection{Unit disk}
Lastly, we consider the Laplacian eigenvalue problem \eqref{eq:pdee} in the unit disk $\Omega = \{ (x,y): x^2 + y^2 \le 1\}$. 
Using polar coordinates, the eigenpairs are
\begin{equation}
((s_{n,m}^2, J_n( s_{n,m} r ) \cos(n \theta))_{n=0,1,2,\cdots}, \qquad
((s_{n,m}^2, J_n( s_{n,m} r ) \sin(n \theta))_{n=1,2,\cdots},
\end{equation}
where $J_n$ is the Bessel function of order $n$, and $s_{n,m}$ are the zeros of the Bessel functions with $m=1,2,3,\cdots$.
Figure \ref{fig:circle} shows the first and seventh discrete eigenfunctions.

\begin{figure}[ht]
\centering
\includegraphics[width=6.2cm]{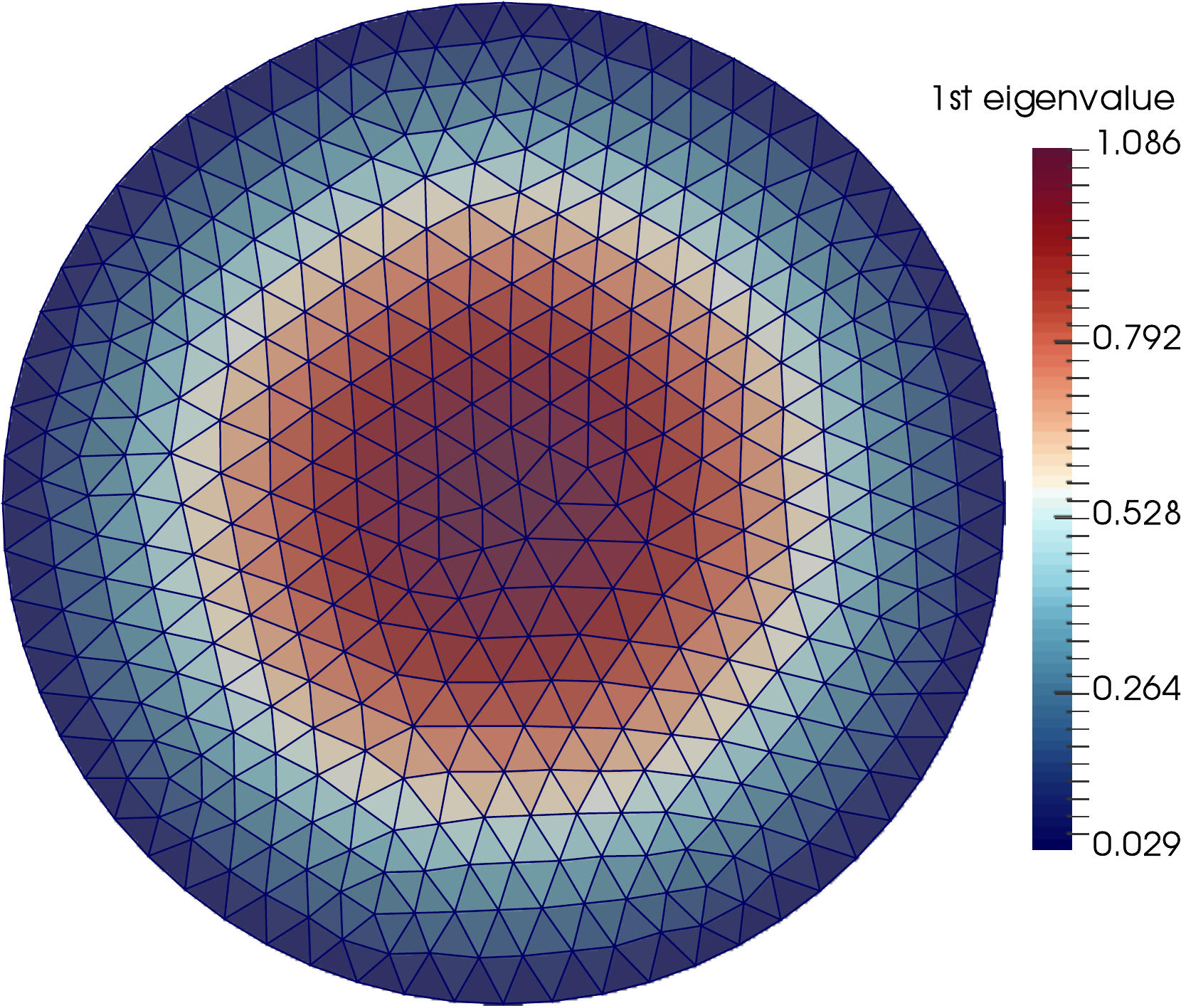} \hspace{0.2cm}
\includegraphics[width=6.2cm]{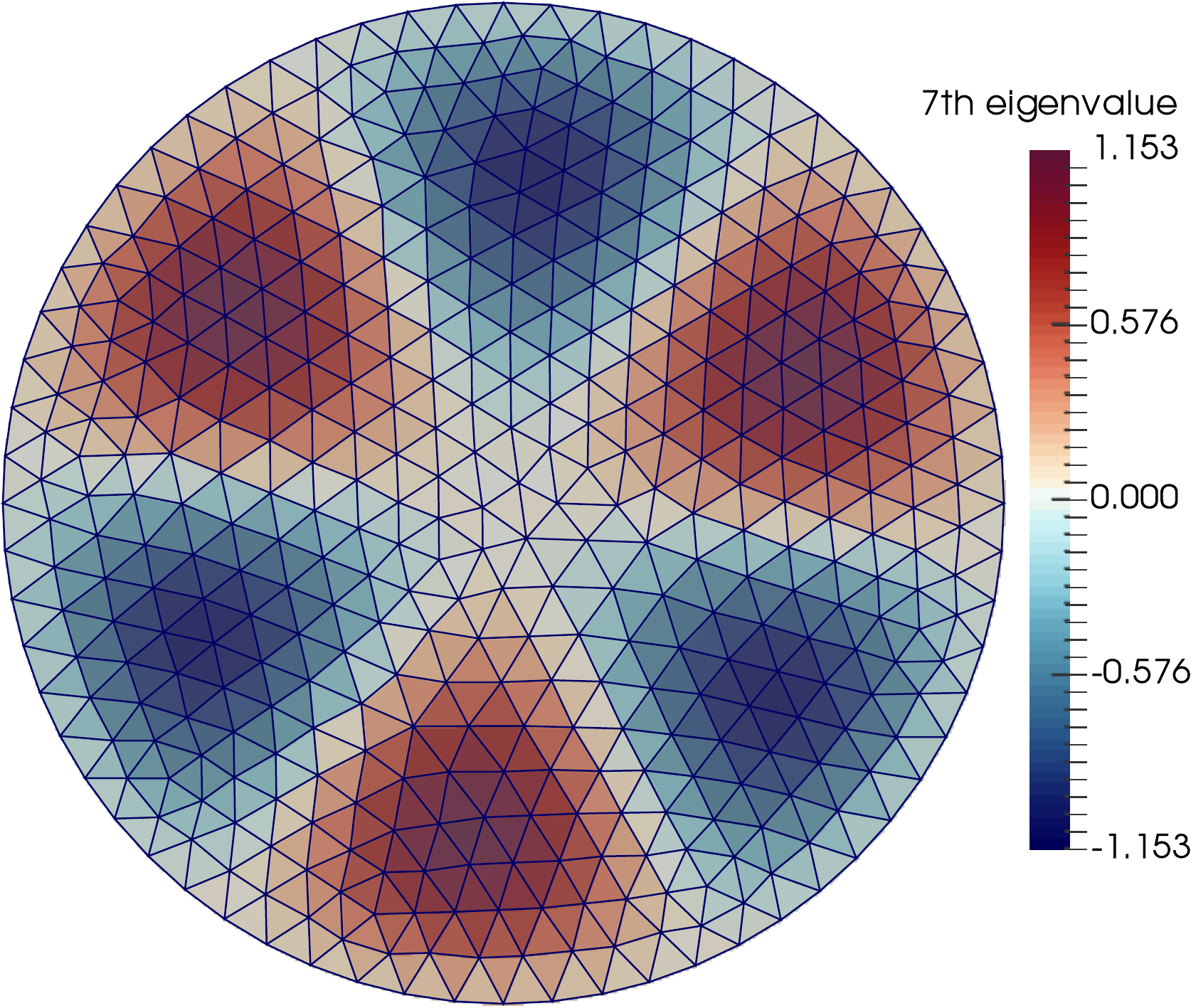} \\
\caption{The first and seventh approximate eigenfunctions in the unit disk. }
\label{fig:circle}
\end{figure}

\begin{table}[ht]
\centering 
\begin{tabular}{| c | c || cc | cc | cc |}
\hline
$k$ & $\ell$ & \multicolumn{2}{c|}{first mode} & \multicolumn{2}{c|}{fourth mode} & \multicolumn{2}{c|}{seventh  mode} \\[0.1cm] 
 & & error &  order & error &  order  & error &  order  \\[0.1cm] \hline
 & 0 & 6.35e-3& 	---	& 1.55e-2& 	---	& 3.19e-2& 	--- \\[0.1cm]
 & 1 & 1.59e-3& 	2.00	& 3.82e-3& 	2.02	& 7.95e-3& 	2.00 \\[0.1cm]
 & 2 & 3.98e-4& 	2.00	& 9.54e-4& 	2.00	& 1.99e-3& 	2.00 \\[0.1cm]
0 & 3 & 9.96e-5& 	2.00	& 2.38e-4& 	2.00	& 4.97e-4& 	2.00 \\[0.1cm]
 & 4 & 2.49e-5& 	2.00	& 5.96e-5& 	2.00	& 1.24e-4& 	2.00 \\[0.1cm]
 & 5 & 6.22e-6& 	2.00	& 1.49e-5& 	2.00	& 3.11e-5& 	2.00 \\[0.1cm] \hline
% & 0 & 6.45e-3& 	---	& 6.19e-3& 	---	& 6.02e-3& 	--- \\[0.1cm]
% & 1 & 1.61e-3& 	2.00	& 1.57e-3& 	1.98	& 1.57e-3& 	1.94 \\[0.1cm]
%1 & 2 & 4.03e-4& 	2.00	& 3.94e-4& 	1.99	& 3.97e-4& 	1.98 \\[0.1cm]
% & 3 & 1.01e-4& 	2.00	& 9.87e-5& 	2.00	& 9.97e-5& 	2.00 \\[0.1cm]
% & 4 & 2.52e-5& 	2.00	& 2.47e-5& 	2.00	& 2.49e-5& 	2.00 \\[0.1cm] \hline
% & 0 & 6.56e-3& 	---	& 6.44e-3& 	---	& 6.49e-3& 	--- \\[0.1cm]
% & 1 & 1.62e-3& 	2.01	& 1.59e-3& 	2.02	& 1.61e-3& 	2.01 \\[0.1cm]
%2 & 2 & 4.04e-4& 	2.01	& 3.96e-4& 	2.01	& 4.00e-4& 	2.01 \\[0.1cm]
% & 3 & 1.01e-4& 	2.00	& 9.89e-5& 	2.00	& 9.99e-5& 	2.00 \\[0.1cm] \hline
\end{tabular}
\caption{Unit disk, relative eigenvalue errors, $k=0$ and $\eta =3$.}
\label{tab:hho2dcirc} 
\end{table}

We approximate the unit disk using a sequence of unstructured triangulations
where the coarsest mesh in the sequence ($\ell=0$) is composed of triangular cells with mesh-size 0.033, and the refinement procedure is the same as above.
Since the boundary of the disk is approximated by straight lines, the error committed by this discretization is of order $h^2$. Thus, we only consider the lowest-order HHO approximation with $k=0$.
Table \ref{tab:hho2dcirc} reports the relative eigenvalue errors with $\eta = 2k+3=3$ for the stabilization parameter. We observe a convergence order of $h^2$ as predicted.

%Lastly, we point out that in general all the convergence rates listed in all the tables achieve their asymptotic orders in very coarse meshes.

\else
\fi

\section{Concluding remarks} \label{sec:conclusion}
In this paper, we devised and analyzed the approximation of the eigenvalues and eigenfunctions of a second-order selfadjoint elliptic operator using the Hybrid High-Order (HHO) method. Using polynomials of degree $k\ge0$ for the face 
% and cell
unknowns, and assuming smooth eigenfunctions, we established theoretically and observed numerically that the errors on the eigenvalues converge as $h^{2k+2}$ whereas the errors on the eigenfunctions converge as $h^{k+1}$ in the $H^1$-seminorm. We considered triangular and polygonal (hexagonal) meshes in the numerical experiments for the Laplace eigenproblem in two-dimensional domains with smooth and non-smooth eigenfunctions. Additionally, we observed numerically in one dimension that the eigenvalue error converges at the even faster rate $h^{2k+4}$ for the particular choice $\eta=2k+3$ of the stabilization parameter in the HHO method. Several extensions of the present work can be considered, among which we mention biharmonic eigenvalue problems and non-selfadjoint second-order eigenvalue problems as well as the Maxwell eigenvalue problem in a curl-curl setting.

\section*{Acknowledgements}
This publication was made possible in part by the CSIRO Professorial Chair in Computational Geoscience at Curtin University and the Deep Earth Imaging Enterprise Future Science Platforms of the Commonwealth Scientific Industrial Research Organisation, CSIRO, of Australia. Additional support was provided by the European Union's Horizon 2020 Research and Innovation Program of the Marie Sk{\l}odowska-Curie grant agreement No. 777778 and the Curtin Institute for Computation.
This work was initiated when the first author was visiting
the Institute Henri Poincar\'e (IHP) during the Fall 2016 Thematic
Trimester on ``Numerical Methods for Partial Differential Equations''. 
The support of the Institute is gratefully acknowledged. Stimulating discussions with E. Burman (University College London) and J.-L. Guermond (Texas A\&M University) who were also visiting the Institute for the Fall 2016 Thematic Quarter
are gratefully acknowledged.

\bibliographystyle{siam}
%\bibliography{ref.bib}
%\bibliographystyle{siamplain}
\bibliography{ref}

%\end{CJK}
\end{document}